%
\documentclass[12pt]{amsart}
\textwidth6.9in
\oddsidemargin-0.2in
\evensidemargin-0.2in
\topmargin-0.3in
\textheight8.9in    
\headheight 12pt \headsep 30pt
\footskip0.4in

\raggedbottom

\DeclareFontFamily{OT1}{pzc}{}
\DeclareFontShape{OT1}{pzc}{m}{it}{<-> s * [1.10] pzcmi7t}{}
\DeclareMathAlphabet{\mathpzc}{OT1}{pzc}{m}{it}

\usepackage{amsfonts}
\usepackage{amssymb}
\usepackage{amsthm}
\usepackage{amsmath}
\usepackage{tikz}
\usetikzlibrary{matrix,arrows}
\usepackage{relsize}
\usepackage[backend=biber,style=numeric]{biblatex}
\addbibresource{scas.bib}
\defbibentryset{set}{C&V1,C&V2}

\newcommand{\sCl}{\mathrm{sCl}}

\newcommand{\gr}{\mathrm{gr}}

\newcommand{\Z}{\mathbb{Z}}
\newcommand{\N}{\mathbb{N}}
 \newsymbol\kk 207C  
 \newcommand{\la}{\langle}
\newcommand{\ra}{\rangle}


\newtheorem{theorem}[subsection]{Theorem}

\newtheorem{lemma}[subsection]{Lemma}
\newtheorem{proposition}[subsection]{Proposition}
\newtheorem{corollary}[subsection]{Corollary}
\theoremstyle{definition} 
\newtheorem{definition}[subsection]{Definition}
\newtheorem{example}[subsection]{Example}

\newtheorem{remark}[subsection]{Remark}
\theoremstyle{empty}
\newtheorem{Condition}{Condition}

 \newcommand\restr[2]{{
   \left.\kern-\nulldelimiterspace 
   #1 
   \vphantom{\big|} 
   \right|_{#2} 
   }}

\begin{document}

\title{Skew Clifford Algebras}

\author[Cassidy]{Thomas Cassidy}
\address{\centerline{Mathematics Department}
\centerline{Bucknell University, Lewisburg, PA}
\centerline{\sf tcassidy@bucknell.edu}
\newline}

\author[Vancliff]{Michaela Vancliff}
\address{\centerline{Department of Mathematics, P.O.~Box 19408}
\centerline{University of Texas at Arlington,
Arlington, TX 76019-0408}
\centerline{\sf vancliff@uta.edu}
\centerline{\sf www.uta.edu/math/vancliff}
}
\thanks{%
\baselineskip14pt%
T.~Cassidy was supported in part by grant 210199 from the Simons Foundation. 
M.~Vancliff was supported in part by NSF grants DMS-0900239 and DMS-1302050. 
M.~Vancliff is grateful to MSRI for general membership in 2013, when this project was initiated.}

\setcounter{page}{1}

\begin{abstract}
\baselineskip15pt
We introduce a generalization, called a {\em skew Clifford algebra}, of a Clifford 
algebra, and relate these new algebras to the notion of graded skew Clifford algebra 
that was defined in~2010. In particular, we examine homogenizations of skew 
Clifford algebras, and determine which skew Clifford algebras can be homogenized to 
create Artin-Schelter regular  algebras.       Just as (classical) Clifford algebras 
are the   Poincar\' e-Birkhoff-Witt (PBW) deformations of exterior 
algebras, skew Clifford algebras are the $\Z_2$-graded PBW deformations of quantum 
exterior algebras. We also determine the possible dimensions of skew Clifford 
algebras and provide several examples.  
\end{abstract}

\baselineskip18pt

\vskip-1in \maketitle

 \vspace{0.2in}

\baselineskip18pt

\section{Introduction}

An exterior algebra can be quantized, or deformed, to create a (classical) 
Clifford algebra.  
Indeed, Clifford algebras are precisely the set of    
Poincar\' e-Birkhoff-Witt (PBW) deformations of exterior algebras~(\cite{Roy}).  
An $\N$-graded analogue of a classical Clifford algebra is related to the
geometric notion of complete intersection and is called a graded Clifford algebra
in~\cite{lebruyn}; under certain geometric conditions, these algebras are
quadratic Artin-Schelter regular (AS-regular) algebras.
Although the classification 
of the AS-regular algebras of global dimension three 
is complete, the classification of the higher-dimensional cases is an open
question that continues to generate considerable interest (see, for example, 
\cite{FV,LPWZ,RZ,ZL}).
Recently, in~\cite{set}, it was shown that a skew (i.e., quantized) version of 
a graded Clifford algebra can be used to construct several new families of
quadratic AS-regular algebras of global dimension four. Since most quadratic
AS-regular algebras of global dimension three can be understood in the framework
of graded skew Clifford algebras   (cf.\ \cite{NVZ}),   it is reasonable to hope that 
the skew-Clifford approach will be useful in understanding the higher-dimensional 
cases.    

Our work herein is motivated by a desire to construct a skew version of the path
from exterior algebras to graded Clifford algebras; to this end, we define a skew
(i.e., quantized)
version of a Clifford algebra that parallels the classical definition.  We show
that these algebras are precisely the  $\Z_2$-graded  PBW deformations of quantum
exterior algebras, thereby mirroring the connection between exterior algebras and
classical Clifford algebras.  We are also interested in knowing whether the
relationship between classical Clifford algebras and graded Clifford algebras
persists in the skew case.  Can a skew Clifford algebra be homogenized to produce
an $\N$-graded quadratic algebra?  Is there a homogenization that produces a
graded skew Clifford algebra?  Does every graded skew Clifford algebra arise from
such a homogenization?  Which skew Clifford algebras can be homogenized to produce
AS-regular algebras?  Answering these questions could conceivably help with the
general project of classifying AS-regular algebras.  Moreover, given the valuable
role that Clifford algebras play in physics and geometry, it is likely that 
skew Clifford algebras will eventually find applications in physics and 
noncommutative geometry.
 
This paper is organized as follows. In Section~\ref{definitions}, we present the basic terminology and establish two equivalent ways of defining a skew Clifford algebra, parallel to the definitions of  classical Clifford algebras.   
Section~\ref{dimension} investigates the possible vector-space dimensions of skew 
Clifford algebras, and includes a variety of illustrative examples.  
Section~\ref{gradings} presents skew Clifford algebras as 
PBW deformations of the quantum exterior algebras. In Section~\ref{homogenization}, we explore possible homogenizations of skew Clifford algebras, establish connections with the graded skew Clifford algebras introduced in~\cite{set}, and determine which skew Clifford algebras can be homogenized to create AS-regular algebras.    Section~\ref{generalizations} compares our skew Clifford algebras with other generalizations of Clifford algebras found in the literature.

\bigskip
\bigskip

%
\section{Definitions}\label{definitions}
In this section, we introduce the notion of a skew Clifford algebra in 
Definition~\ref{scliff2}, and determine a universal property for the algebra in 
Theorem~\ref{scliff1}. Our approach follows the classical setting, which we first
recall.

Throughout the paper, $\kk$ denotes a field such that char$(\kk) \neq 2$,
and we denote the set of nonzero elements in $\kk$ by $\kk^\times$. 
We first recall the definition of a finitely generated classical Clifford algebra
via a universal property.
 
\begin{definition}\label{cliff1} \cite{Gar}  \ Let $V$ be a finite-dimensional vector space over $\kk$ together with a symmetric bilinear form, $\phi: V \times V \to \kk$.  A Clifford algebra associated with~$V$ and~$\phi$ is an associative $\kk$-algebra, 
$\mathrm{Cl}(V,\,\phi)$, together with a linear map,
$g:V \to \mathrm{Cl}(V,\,\phi)$, satisfying the condition 
$g(v)^2 = \phi(v,\, v)$,  for all $v\in V$, such that, for
every $\kk$-algebra~$A$, and for every linear map $f : V\to A$ with\\[-9mm]
\[
f(v)^2 = \phi(v,\,v)\cdot 1_A , 
\]
for all  $v\in V$,
there is a unique algebra homomorphism, $\bar f : \mathrm{Cl}(V,\,\phi) \to A$, such that
$f = \bar f\circ g$ as in the diagram:
\center{
\begin{tikzpicture}
\matrix (m) [matrix of math nodes, row sep=1em,
column sep=2em, text height=1.5ex, text depth=0.25ex]
{V& \mathrm{Cl}(V,\,\phi)\\ 
& \\
& A \\
};
\path[->] (m-1-1) edge node [above]{$g$} (m-1-2) ;
\path[->] (m-1-2) edge [dotted, very thick] node  [right] {$\bar f$} (m-3-2);
\path[->] (m-1-1) edge  node [below] {$f$} (m-3-2);
\end{tikzpicture}}
\end{definition} 

Since the universal property implies that $\mathrm{Cl}(V,\,\phi)$ is unique (if it exists),
one may, alternatively, define the classical Clifford algebra associated with~$V$ 
and~$\phi$   via generators and relations as follows.
\begin{definition}\label{cliff2}
Let $V$ be a  vector space with basis $\{x_1,\ldots , x_n\}$,
and let $T(V)$ be the tensor algebra of~$V$.  The Clifford algebra associated 
with~$V$ and~$\phi$ is the quotient of~$T(V)$ by the ideal generated by all elements of
the form $x_i\otimes x_j+x_j\otimes x_i-2\phi(x_i,\, x_j)\cdot 1$  for all $i, j$.
\end{definition} 

In the literature, $\mathrm{Cl}(V, \, \phi)$ is sometimes called a universal Clifford algebra.

In order to define the parallel notion of a quantized Clifford algebra, we need
some additional terminology. We use $M(n,\, \kk)$ to denote the space of all 
$n\times n$ matrices with entries in~$\kk$.

\begin{definition} \hfill\\[-7mm]
\begin{enumerate}
\item[(a)]
Following \cite[Section I.2.1]{Brown&Goodearl},   $\mu = (\mu_{ij}) \in 
M(n,\, \kk)$ is called {\it multiplicatively antisymmetric} if 
$\mu_{ij}\mu_{ji} = \mu_{ii} = 1$ for all $i, j$. 
\item[(b)]
Let $\mu\in M(n,\, \kk)$ be multiplicatively antisymmetric and let $V$ be
a vector space with basis $\mathcal B=\{x_1,\ldots , x_n\}$.  
We call a bilinear form $\phi:V\times V\to \kk$  {\em $\mu$-symmetric} (relative to 
$\mathcal B$) if $\phi(x_i,\, x_j)=\mu_{ij}\phi(x_j,\, x_i)$ for all $i, j$.   
\item[(c)] \cite{set}
Let $\mu\in M(n,\, \kk)$ be multiplicatively antisymmetric.
A matrix $M \in M(n,\, \kk)$ is 
called $\mu$-{\em symmetric} if $M_{ij} = \mu_{ij}M_{ji}$ for all $i, j$.
\end{enumerate}
\end{definition}

If $\mu_{ij}=1$ for all $i, j$, then $\mu$-symmetric bilinear forms are in fact symmetric.
Notice that the definition of $\mu$-symmetry for a bilinear form is always relative to a specified  basis.  For this reason we will typically present~$\phi$ via 
a $\mu$-symmetric matrix $B\in M(n,\, \kk)$  that 
defines the values of~$\phi$ on the relevant basis for $V$; i.e., 
$B_{ij}=\phi(x_i,\, x_j)$ for all $i, j \in \{ 1, \ldots , n\}$.

We would like to generalize Definitions \ref{cliff1} and  \ref{cliff2} by allowing the symmetric bilinear form~$\phi$ to be replaced with a $\mu$-symmetric bilinear form.  It is easy to state the $\mu$-symmetric version of Definition~\ref{cliff2}, but one should note that such algebras can be trivial; i.e., of dimension zero
(cf.\ Example~\ref{zerodim}).

\begin{definition}\label{scliff2}
Let $V$ be a vector space with ordered basis $\{x_1, \ldots , x_n\}$, and 
let $\phi$~be a $\mu$-symmetric bilinear form associated with this basis.   
The skew Clifford algebra $\sCl(V,\, \mu,\, \phi)$ associated with~$\phi$ 
is the quotient of the tensor algebra~$T(V)$ by the ideal generated by all elements 
of the form 
$x_i\otimes x_j+\mu_{ij}x_j\otimes x_i-2\phi(x_i, \, x_j)\cdot 1$  for all $i, j$.
\end{definition} 

If $\mu_{ij}=1$ for all $i, j$, then $\sCl(V,\, \mu,\,  \phi)$ is a classical Clifford 
algebra. 
If $\phi \equiv 0$, then $\sCl(V,\, \mu,\, \phi)$ is a quantum exterior algebra (see
\cite{Brown&Goodearl}) which we denote by $\Lambda_\mu(V)$.  We sometimes refer to
$\sCl(V,\, \mu, \, \phi)$ as the skew Clifford algebra associated to~$\mu$ 
and~$B$, where $B$~is the $\mu$-symmetric matrix determined by~$\phi$ relative to 
the basis $\{x_1,\ldots , x_n\}$.

\begin{remark}
Let $R$ be the skew Clifford algebra associated to matrices $\mu$ and $B$. From the defining relations in Definition \ref{scliff2}, we see that the opposite 
algebra~$R^{\mathrm{op}}$ 
is the skew Clifford algebra associated with the transposed matrices 
$\mu^{\mathrm{t}}$ and $B^{\mathrm{t}}$.
\end{remark}

The inclusion of $V$ into $T(V)$, composed with the natural projection of $T(V)$
onto $\sCl(V,\, \mu,\, \phi)$, creates a map which we call~$g$.  
\begin{figure}[h]
\center{ 
\begin{tikzpicture}
\matrix (m) [matrix of math nodes, row sep=1em,
column sep=2em, text height=1.5ex, text depth=0.25ex]
{V& T(V)\\ 
& \\
& \sCl(V,\, \mu,\, \phi)\\
};
\path[right hook->] (m-1-1) edge (m-1-2) ;
\path[->>] (m-1-2) edge [thick] node  [right] {$\pi$} (m-3-2);
\path[->] (m-1-1) edge  node [below] {$g$} (m-3-2);
\end{tikzpicture}}
\end{figure}
In the case of a (universal) Clifford algebra, $g$ is necessarily injective, but
this need not be true for skew Clifford algebras in general 
(cf.\ Example \ref{zerodim}).  A description of a skew Clifford algebra,  parallel to  Definition \ref{cliff1}, can be made in a straightforward manner when the map $g$ is injective.  We prove in Theorem \ref{tfae} that the map
$g:V\to \sCl(V,\, \mu,\, \phi)$ is injective  exactly when the skew Clifford algebra
  has dimension $2^{\dim(V)}$.

\begin{theorem} \label{scliff1}
Let $V$ be a vector space over~$\kk$ with fixed ordered basis 
$\{x_1, \ldots , x_n\}$ together with a $\mu$-symmetric bilinear form, $\phi: V
\times V \to \kk$.  If $g:V\to \sCl(V,\, \mu, \, \phi)$ is injective, then  
$\sCl(V,\, \mu, \, \phi)$ satisfies the condition 
\[
g(x_i)g(x_j)+\mu_{ij}g(x_j)g(x_i) = 2\phi(x_i,\, x_j)\cdot 1
\]
for all $i, j\in \{1,\ldots , n\}$, and, for every $\kk$-algebra $A$, and for every 
injective linear map $f : V\to A$, with 
\[
f(x_i)f(x_j)+\mu_{ij}f(x_j)f(x_i) = 2\phi(x_i, \, x_j)\cdot 1_A
\] 
for all $i, j\in\{1,\ldots , n\}$, there is a unique algebra 
homomorphism, $\bar f : \sCl(V,\, \mu, \, \phi) \to A$, such that
$f = \bar f\circ g$ as in the diagram$:$
\center{
\begin{tikzpicture}
\matrix (m) [matrix of math nodes, row sep=1em,
column sep=2em, text height=1.5ex, text depth=0.25ex]
{V& \sCl(V,\, \mu, \, \phi)\\ 
& \\
& A \\
};
\path[ ->] (m-1-1) edge node [above]{$g$} (m-1-2) ;
\path[->] (m-1-2) edge [dotted, very thick] node  [right] {$\bar f$} (m-3-2);
\path[->] (m-1-1) edge  node [below] {$f$} (m-3-2);
\end{tikzpicture}}
\end{theorem}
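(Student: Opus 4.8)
The plan is to mimic the classical proof of the universal property of Clifford algebras, taking advantage of the fact that $\sCl(V,\,\mu,\,\phi)$ is defined as a quotient of $T(V)$ and that $g$ is the composite $V\hookrightarrow T(V)\xrightarrow{\pi}\sCl(V,\,\mu,\,\phi)$. First I would verify that $\sCl(V,\,\mu,\,\phi)$ itself satisfies the stated quadratic identity: this is immediate from Definition~\ref{scliff2}, since $g(x_i)g(x_j)+\mu_{ij}g(x_j)g(x_i)-2\phi(x_i,\,x_j)\cdot 1 = \pi\bigl(x_i\otimes x_j+\mu_{ij}x_j\otimes x_i-2\phi(x_i,\,x_j)\cdot 1\bigr) = 0$ because the argument lies in the defining ideal. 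So the content of the theorem is the universal mapping property.

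Next, given a $\kk$-algebra $A$ and an injective linear map $f:V\to A$ satisfying the relations $f(x_i)f(x_j)+\mu_{ij}f(x_j)f(x_i)=2\phi(x_i,\,x_j)\cdot 1_A$, I would invoke the universal property of the tensor algebra $T(V)$: the linear map $f$ extends uniquely to an algebra homomorphism $\tilde f:T(V)\to A$ with $\tilde f\restr{}{V}=f$. The key step is then to check that $\tilde f$ kills the defining ideal $I$ of $\sCl(V,\,\mu,\,\phi)$, so that it factors through $\pi$. Since $I$ is generated as a two-sided ideal by the elements $x_i\otimes x_j+\mu_{ij}x_j\otimes x_i-2\phi(x_i,\,x_j)\cdot 1$, and $\tilde f$ is an algebra homomorphism, it suffices to show $\tilde f$ annihilates each of these generators; but $\tilde f\bigl(x_i\otimes x_j+\mu_{ij}x_j\otimes x_i-2\phi(x_i,\,x_j)\cdot 1\bigr) = f(x_i)f(x_j)+\mu_{ij}f(x_j)f(x_i)-2\phi(x_i,\,x_j)\cdot 1_A = 0$ by hypothesis. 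Hence there is an induced algebra homomorphism $\bar f:\sCl(V,\,\mu,\,\phi)\to A$ with $\bar f\circ\pi=\tilde f$, and restricting to $V$ gives $\bar f\circ g = \tilde f\restr{}{V} = f$, as required.

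For uniqueness of $\bar f$: since $g$ is injective by hypothesis, its image $g(V)$ spans a copy of $V$ inside $\sCl(V,\,\mu,\,\phi)$, and the algebra $\sCl(V,\,\mu,\,\phi)$ is generated by $g(V)$ (being a quotient of $T(V)$, which is generated by $V$). Any algebra homomorphism out of $\sCl(V,\,\mu,\,\phi)$ is therefore determined by its values on $g(x_1),\ldots,g(x_n)$, and the condition $f=\bar f\circ g$ forces $\bar f(g(x_i))=f(x_i)$ for each $i$; so $\bar f$ is unique.

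I do not expect a serious obstacle here — the argument is essentially formal, riding on the universal property of $T(V)$ and the fact that passing to the quotient by a relations-ideal is exactly what encodes "freest algebra satisfying these relations." The only point requiring a little care is the role of the injectivity hypothesis on $g$ (and correspondingly on $f$): without it the diagram statement $f=\bar f\circ g$ is still meaningful, but injectivity of $g$ is what makes the universal property a faithful parallel of Definition~\ref{cliff1} and guarantees that $V$ genuinely embeds; the hypothesis that $f$ be injective plays no role in the existence or uniqueness argument above and is included only to keep the statement symmetric with the classical case. I would remark on this briefly rather than belabor it.
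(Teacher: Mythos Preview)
Your proof is correct and follows essentially the same approach as the paper's: both verify the quadratic identity directly from the defining relations and establish existence and uniqueness of $\bar f$ via the fact that $\sCl(V,\,\mu,\,\phi)$ is generated by the $g(x_i)$. Your route through the universal property of $T(V)$ is a bit more explicit than the paper's terse ``define $\bar f$ on generators and note it is well defined,'' and your closing observation---that neither the injectivity of $g$ nor of $f$ is actually used in the existence/uniqueness argument---is accurate and worth keeping.
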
 
\begin{proof}
The defining relations of $\sCl(V,\, \mu, \, \phi)$ guarantee that 
\[g(x_i) g(x_j) +\mu_{ij}g(x_j)g(x_i)= 2\phi(x_i, \, x_j)\cdot 1,\] 
for all $i, j\in\{1,\ldots , n\}$.  Let $A$~be a $\kk$-algebra and $f : V\to A$ 
be an injective linear map such that 
\[f(x_i)f(x_j)+\mu_{ij}f(x_j)f(x_i) = 2\phi(x_i, \, x_j)\cdot 1_A,\] 
for all $i, j$.  
Since the set $\{g(x_i)\}$ generates $\sCl(V,\, \mu, \, \phi)$, it suffices to define 
$\bar f$ on these generators.  Set $\bar f(g(x_i)):=f(x_i)$ for all $i$.  Since 
$g$~is injective, $\bar f$ is well defined.  Uniqueness  follows from the fact 
that $\{g(x_i)\}$ generates $\sCl(V,\, \mu, \, \phi)$.
\end{proof}

\bigskip 
\bigskip


\section{Dimension}\label{dimension}

In this section we explore the possible dimension of a skew Clifford algebra.  A 
universal Clifford algebra on $n$~generators has dimension $2^n$.  In contrast, 
a skew Clifford algebra on $n$~generators can have much smaller dimension.

We use Bergman's Diamond Lemma, from \cite{bergman}, to calculate the dimension of
skew Clifford algebras.   Let $\sCl(V,\, \mu,\, \phi)$ be a skew Clifford algebra
as in Definition~\ref{scliff2}, and write $B_{ij}=\phi(x_i,\, x_j)$ for all 
$i, j\in\{1,\ldots , n\}$, where $\{x_1, \ldots, x_n\}$ is a fixed ordered basis 
of~$V$.  We order the generators so that $x_i<x_j$ if $i<j$.  The defining relations 
for $\sCl(V,\, \mu,\, \phi)$ provide initial ambiguities of the
form $x_j^2 x_i$, $x_j x_i^2$ and $x_k x_j x_i$ for all $i<j<k$.  These ambiguities are all resolvable provided  the following linear equations hold for all distinct $i, j$ 
and $k$:
\begin{equation}\label{eq1}
2(1-\mu_{ij})B_{ij}x_i=(1-\mu^2_{ij})B_{ii}x_j,
\end{equation}
\begin{equation}\label{eq2}
2(1-\mu_{ij})B_{ij}x_j=(1-\mu^2_{ij})B_{jj}x_i,
\end{equation}
\begin{equation}\label{eq3}
(1-\mu_{ij}\mu_{ik})B_{jk}x_i+(\mu_{ij}-\mu_{jk})B_{ik}x_j=(1-\mu_{jk}\mu_{ik})B_{ij}x_k.
\end{equation}
(One could use equation (\ref{eq3}) with repeated indices to consolidate all three
equations.) These equations hold in $\sCl(V,\, \mu,\,\phi)$ without introducing new linear relations if and only if the coefficients in (\ref{eq1}), (\ref{eq2}) and (\ref{eq3}) are all zero.  This is equivalent to the following condition:

\noindent $\ \bigstar$ \ \ \ \ \ for all $i$ and $j$, if $B_{ij}\ne 0$ then   
$\mu_{ik}=\mu_{kj}$   for all $k$.

\begin{lemma}\label{span}
The dimension of a skew Clifford algebra $\sCl(V,\, \mu,\, \phi)$ is at
most $2^{\dim(V)}$.
\end{lemma}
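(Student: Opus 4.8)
The plan is to prove the stronger fact that $\sCl(V,\, \mu,\, \phi)$ is spanned, as a $\kk$-vector space, by the collection $S$ of \emph{ordered squarefree monomials} $x_{i_1}x_{i_2}\cdots x_{i_k}$ with $1\le i_1<i_2<\cdots<i_k\le n$ (with $k=0$ giving the empty product $1$), where $n=\dim(V)$. Since the monomials in $S$ are indexed by the subsets of $\{1,\dots,n\}$, there are exactly $2^n$ of them, so this yields $\dim\sCl(V,\,\mu,\,\phi)\le 2^n=2^{\dim(V)}$ at once.

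First I would extract two rewriting rules from the defining relations. Taking $i=j$ and using $\mu_{ii}=1$ together with $\mathrm{char}(\kk)\ne 2$ gives $x_i^2=B_{ii}\cdot 1$, where $B_{ij}=\phi(x_i,\,x_j)$; and for $i<j$, solving the relation for $x_jx_i$ and using $\mu_{ij}\mu_{ji}=1$ gives $x_jx_i=-\mu_{ji}x_ix_j+2\mu_{ji}B_{ij}\cdot 1$. Consequently, any monomial $x_{i_1}\cdots x_{i_m}$ that is not already in $S$ contains either an adjacent pair $x_ix_i$ or an adjacent descent $x_jx_i$ with $j>i$, and applying the corresponding rule rewrites it as a $\kk$-linear combination of a monomial of length $m$ together with a (possibly absent) monomial of length $m-2$.

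Next I would verify that iterating these rewrites terminates. To a monomial $w=x_{i_1}\cdots x_{i_m}$ attach the pair $(m,\ \mathrm{inv}(w))\in\N\times\N$, where $\mathrm{inv}(w)=\#\{(p,q):p<q,\ i_p>i_q\}$, ordered lexicographically. The rule $x_i^2\to B_{ii}$ strictly drops the first coordinate; the swap rule applied to an adjacent descent $x_jx_i$ ($j>i$) produces one length-$m$ term whose inversion count drops by exactly $1$ (swapping two adjacent positions removes the inversion between them and, by a one-line check, leaves the contribution of every other pair of positions unchanged) plus a strictly shorter term. Hence every monomial produced by a rewriting step is strictly smaller in the well-order $\N\times\N$, so the process terminates; a monomial on which no rule applies has no repeated adjacent letter and no adjacent descent, hence satisfies $i_1<i_2<\cdots<i_m$ and lies in $S$. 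Starting from the evident spanning set of all monomials in $x_1,\dots,x_n$ and reducing, every element of $\sCl(V,\,\mu,\,\phi)$ is thus written as a $\kk$-linear combination of elements of $S$. (Alternatively, this is the general fact, contained in Bergman's Diamond Lemma and not requiring resolution of ambiguities, that the irreducible words of a reduction system span the quotient algebra.)

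The only step demanding genuine care is the termination argument, and within it the observation that an adjacent transposition changes $\mathrm{inv}$ by exactly $-1$; the remainder is bookkeeping. Note that the harder complementary statement — that these $2^n$ monomials are linearly independent, so that equality $\dim\sCl(V,\,\mu,\,\phi)=2^{\dim(V)}$ holds precisely when condition $\bigstar$ is satisfied — is a separate matter handled by the ambiguity analysis preceding the lemma, and is not needed here.
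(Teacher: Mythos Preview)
Your proposal is correct and follows essentially the same approach as the paper: both argue that the ordered squarefree monomials $x_{i_1}\cdots x_{i_k}$ with $i_1<\cdots<i_k$ span $\sCl(V,\,\mu,\,\phi)$, whence the $2^n$ bound. The paper's own proof is a two-line assertion that this spanning ``is clear'' from the defining relations; your version supplies the explicit rewriting rules and a careful termination argument via the lex order on $(\text{length},\ \text{inversions})$, which is more than the paper provides but not a different route.
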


\begin{proof}
Let $\{x_1,\ldots , x_n\}$ be a basis for $V$.  Since, in $\sCl(V,\, \mu, \, \phi)$,
we have 
\[
g(x_i)g(x_j)+\mu_{ij}g(x_j)g(x_i)=2\phi(x_i,\, x_j)\cdot 1,
\]
for all $i, j$, it is clear that $1$ together with the products
\begin{equation}\label{spanset}
\qquad \qquad \qquad 
g(x_{i_1})g(x_{i_2})\cdots g(x_{i_k}), 
\qquad \qquad 1\le i_1<i_2<\cdots<i_k\le n,
\end{equation}
span $\sCl(V,\,  \mu, \, \phi)$.  This gives a spanning set with $2^n$ elements, and so 
the dimension of $\sCl(V,\, \mu, \, \phi)$ is at most~$2^n$.  
\end{proof}

We say that a skew Clifford algebra $\sCl(V,\, \mu, \, \phi)$ 
has {\it full dimension} if its dimension is
$2^{\dim(V)}$, and we say that it is {\it trivial} if its dimension is~$0$.
Clifford algebras are examples of skew Clifford algebras with full dimension, but
there exist many skew Clifford algebras of full dimension that are not Clifford
algebras, as illustrated in Examples \ref{fdex1}, \ref{fdex2}, \ref{fdex3} and
\ref{notsimple}.  Examples \ref{inbetweenex1} and \ref{betweenex2}   demonstrate
that dimensions strictly between zero and $2^{\dim(V)}$ can occur.

\begin{remark}\label{diag}
As a consequence of condition $\bigstar$, if $R$~is a skew Clifford algebra of
full dimension such that $B \in M(n, \, \kk)$~is diagonal of rank~$n$, 
then $\mu$~is a symmetric matrix with $\mu_{ij}^2=1$ for all $i, j$.
\end{remark}

\begin{example}\label{fdex1}
 For any $a\in\kk^\times$ and $b\in \kk$, let 
$\mu= 
\footnotesize \begin{pmatrix}
      1&a & 1 \\ a^{-1}&1& a \\ 1&a^{-1}&  1
     \end{pmatrix}$ 
and $B= \footnotesize \begin{pmatrix} 
            0&0 & b \\ 0&0& 0  \\ b&0& 0 
	    \end{pmatrix}$.  
Up to isomorphism, we may take $b \in \{0, \, 1\}$.  These algebras have 
dimension~$8$.
\end{example}

\begin{example}\label{fdex2}
If $\mu$ is any multiplicatively antisymmetric matrix and $B = 0$,
then $\sCl(V,\, \mu, \, \phi) = \Lambda_\mu(V)$,  the quantum exterior algebra.   
Quantum exterior algebras have full dimension.
\end{example}

\begin{example}\label{fdex3}
Let $\mu = (\mu_{ij})$ be a multiplicatively antisymmetric matrix such that 
$\mu_{ij} = -1$, for all $i \neq j$, and let $B$ be any diagonal matrix. Up to 
isomorphism, we may take the diagonal entries in $B$ to be either $0$ or $1$.  
These skew Clifford algebras of full dimension are quotients of polynomial rings.
\end{example}

\begin{example}\label{inbetweenex1}
Let 
\[
\mu= \footnotesize
\begin{pmatrix}
      1&\mu_{12}& \mu_{13} & 1 \\
      \mu_{21}&1&\mu_{23} & 1 \\
      \mu_{31}&\mu_{32}& 1 & 1 \\
      1&1& 1 & 1 
\end{pmatrix}  
\normalsize
\quad \text{and} \quad 
B= \footnotesize
\begin{pmatrix}
      0&0& 0 & B_{14} \\
      0&0& 0 & B_{24} \\
      0&0& 0 & B_{34} \\
      B_{14}&B_{24}& B_{34} & 1 
\end{pmatrix},
\]
where $B_{i4}\neq 0$, for all $i$.              
If $\mu_{23}=\mu_{13}=1\ne \mu_{12}$, then this algebra has dimension~$8$.  
If either $\mu_{13}$ or $\mu_{23}$ is not equal to $1$, then the algebra has 
dimension~$4$.
\end{example}

\begin{example}\label{zerodim}
Let $\mu=\footnotesize 
\begin{pmatrix} 
        1&-1\\ -1&1 
\end{pmatrix}$ 
and 
$B=\footnotesize 
\begin{pmatrix} 
      0&1\\ -1&0 
\end{pmatrix}$.  
This algebra has dimension zero.
\end{example}

\begin{example}\label{notsimple}
Let $\mu=\footnotesize 
\begin{pmatrix}
      1&1&-1\\ 1&1&-1\\ -1&-1&1 
\end{pmatrix}$ 
and   
$B= \footnotesize 
\begin{pmatrix} 
0&a&0\\ a&0&0\\ 0&0&1 
\end{pmatrix}$,  
for some $a \in \kk$.
This algebra has dimension~$8$.  We note that the algebra is not simple, since 
$x^3+1$ generates a proper ideal. 
\end{example}

We note that, for generic data $(\mu,\, \phi)$, where $\mu$ is a multiplicatively 
antisymmetric matrix and $\phi$ is a $\mu$-symmetric bilinear form, the associated 
skew Clifford algebra has dimension zero.  

The Koszul dual, $\Lambda_\mu(V)^!$, of $\Lambda_\mu(V)$ is a skew polynomial ring
and the matrix~$B$, in this case,  defines a quadratic element 
$q \in \Lambda_\mu(V)^!$ as follows.  
Using $z_i$ to denote the generator in $\Lambda_\mu(V)^!$ dual to $x_i$, and
writing $z=(z_1, \ldots , z_n)$, we define $q:=zBz^{\mathrm{t}}$.
 
\begin{theorem}\label{tfae}
With notation as in Definition~\ref{scliff2}, let 
$\mathcal B = \{x_1,\ldots , x_n\}$ and let $B$ denote the matrix of~$\phi$ with 
respect to~$\mathcal B$. The following are equivalent$:$
\begin{enumerate}
\item[(a)] $g:V\to \sCl(V,\,  \mu, \, \phi)$ is injective,
\item[(b)] $\sCl(V,\, \mu, \, \phi)$ has dimension $2^n$,
\item[(c)] the coefficients in equations (\ref{eq1}), (\ref{eq2}) and (\ref{eq3}) 
are all zero,
\item[(d)]  in the skew polynomial ring $\Lambda_\mu(V)^!$, the element $q$ 
determined by the matrix~$B$ is central.
\end{enumerate}
\end{theorem}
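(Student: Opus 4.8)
The plan is to prove the cycle of implications
$(b)\Rightarrow(a)\Rightarrow(c)\Rightarrow(b)$ together with the equivalence $(c)\Leftrightarrow(d)$, leaning heavily on the Diamond Lemma discussion that precedes the statement. For $(a)\Rightarrow(c)$, I would argue contrapositively: if some coefficient in~\eqref{eq1}, \eqref{eq2} or~\eqref{eq3} is nonzero, then that equation forces a nontrivial linear dependence among the images $g(x_1),\ldots,g(x_n)$ in $\sCl(V,\,\mu,\,\phi)$, since these equations are genuine identities there (each arising from resolving an ambiguity two different ways). A nontrivial linear relation among the $g(x_i)$ contradicts injectivity of~$g$, because $g$ restricted to the span of the basis is $g$ itself. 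Conversely, for $(c)\Rightarrow(b)$: when all those coefficients vanish, condition~$\bigstar$ holds, all initial ambiguities $x_j^2x_i$, $x_jx_i^2$, $x_kx_jx_i$ resolve without introducing new relations, so by Bergman's Diamond Lemma the reduction-irreducible monomials---precisely the $2^n$ ordered products $x_{i_1}\cdots x_{i_k}$ with $i_1<\cdots<i_k$ listed in~\eqref{spanset}---form a $\kk$-basis; hence $\dim\sCl(V,\,\mu,\,\phi)=2^n$. Finally $(b)\Rightarrow(a)$ is the easiest direction: by Lemma~\ref{span} the $2^n$ products in~\eqref{spanset} always span, so if the dimension equals~$2^n$ they are linearly independent; in particular $g(x_1),\ldots,g(x_n)$ (the degree-one members of that set) are linearly independent, so $g$ is injective.

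For the equivalence $(c)\Leftrightarrow(d)$, I would compute directly in the skew polynomial ring $\Lambda_\mu(V)^!$, whose defining relations are $z_jz_i=\mu_{ij}z_iz_j$ for $i<j$ and $z_i^2=0$ (this is the Koszul dual of the quantum exterior algebra, as recalled just before the theorem). Write $q=zBz^{\mathrm t}=\sum_{i,j}B_{ij}z_iz_j$; using $z_i^2=0$ and the commutation rule, $q=\sum_{i<j}(B_{ij}-\mu_{ij}B_{ji}\cdot\mu_{ji}^{-1}?)\dots$---more cleanly, $q=\sum_{i<j}(B_{ij}+\mu_{ji}B_{ji})z_iz_j$, which by $\mu$-symmetry of~$B$ equals $2\sum_{i<j}B_{ij}z_iz_j$ (here I use $\mu_{ji}B_{ji}=\mu_{ji}\mu_{ji}^{-1}B_{ij}=B_{ij}$). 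Then $q$ is central iff $z_kq=qz_k$ for every~$k$; expanding $z_k(z_iz_j)$ and $(z_iz_j)z_k$ via the commutation relations and collecting coefficients of each irreducible monomial $z_{\ell}z_kz_i$ (or $z_k^2z_i$-type terms, which vanish), one obtains exactly the linear conditions ``$B_{ij}=0$ unless $\mu_{ik}=\mu_{kj}$ for all~$k$,'' i.e.\ condition~$\bigstar$. Since the vanishing of the coefficients in~\eqref{eq1}--\eqref{eq3} was already shown to be equivalent to~$\bigstar$, this establishes $(c)\Leftrightarrow(d)$.

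The main obstacle I anticipate is organizing the bookkeeping in $(c)\Leftrightarrow(d)$ so that the centrality conditions match the ambiguity-resolution conditions term-by-term: one must be careful that the three separate families~\eqref{eq1}, \eqref{eq2}, \eqref{eq3} (including the repeated-index cases the authors mention) correspond correctly to the monomials arising in $z_kq-qz_k$, and that the $z_i^2=0$ relations in $\Lambda_\mu(V)^!$ are what kill the ``diagonal'' contributions. It may be cleanest to bypass a head-on computation and instead invoke a known duality: the quadratic dual of the central-extension/homogenization construction, or simply cite that $q$ central in $\Lambda_\mu(V)^!$ is equivalent to the corresponding Diamond-Lemma ambiguities in the PBW-deformation being resolvable---but since the excerpt has already done the explicit ambiguity analysis, the term-matching computation is the honest route and is routine once set up. The other directions, $(a)\Leftrightarrow(b)\Leftrightarrow(c)$, are then immediate from Lemma~\ref{span}, the Diamond Lemma, and the elementary observation that linear independence of the degree-one part of the PBW basis is the same as injectivity of~$g$.
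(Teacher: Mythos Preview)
Your cycle $(b)\Rightarrow(a)\Rightarrow(c)\Rightarrow(b)$ is essentially the paper's proof, and the arguments you sketch for those three implications are correct and match the paper's almost verbatim.

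The gap is in $(c)\Leftrightarrow(d)$. You write that $\Lambda_\mu(V)^!$ has defining relations $z_jz_i=\mu_{ij}z_iz_j$ for $i<j$ \emph{and} $z_i^2=0$. That is not the Koszul dual of the quantum exterior algebra: as the paper states just before the theorem, $\Lambda_\mu(V)^!$ is a \emph{skew polynomial ring}, so the $z_i^2$ are not zero there. (The algebra with $z_i^2=0$ would be another quantum exterior algebra, not its dual.) Because of this misidentification, your expression $q=2\sum_{i<j}B_{ij}z_iz_j$ is wrong; the correct element is $q=\sum_i B_{ii}z_i^2+2\sum_{i<j}B_{ij}z_iz_j$, and the diagonal terms carry content. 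Concretely, centrality of $z_i^2$ forces $\mu_{ik}^2=1$ for all $k$ whenever $B_{ii}\neq 0$, which is exactly the $i=j$ instance of condition~$\bigstar$ and corresponds to the right-hand coefficients $(1-\mu_{ij}^2)B_{ii}$ and $(1-\mu_{ij}^2)B_{jj}$ in equations~(\ref{eq1}) and~(\ref{eq2}). Your computation, having discarded the diagonal, would yield only the off-diagonal part of~$\bigstar$, so you would not actually recover all of~(c).

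The fix is straightforward once you use the correct relations: in the skew polynomial ring each reduced degree-two monomial $z_iz_j$ (with $i\le j$) satisfies $z_k(z_iz_j)=\mu_{ik}\mu_{jk}(z_iz_j)z_k$, and since the degree-three reduced monomials are linearly independent, $q$ is central if and only if $\mu_{ik}\mu_{jk}=1$ for every $k$ and every pair $(i,j)$ with $B_{ij}\neq 0$---which is exactly condition~$\bigstar$, including the diagonal case. This is how the paper handles it.
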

\begin{proof}
$(a)\Rightarrow(c)$.  
The set $\{ g(x_1), \ldots , g(x_n) \}$ spans $g(V)$ in $\sCl(V,\, \mu, \, \phi)$.  
Any linear relation $\sum_i \alpha_i g(x_i) = 0$, 
$\alpha_i\in\kk$, would imply that $\sum_i \alpha_i x_i \in \ker(g)$.
Since $g$ is injective, there can be no such nontrivial linear relation, and hence the 
coefficients in equations (\ref{eq1}), (\ref{eq2}) and (\ref{eq3}) are all zero.

$(c)\Rightarrow(b)$.  Since the coefficients in equations  (\ref{eq1}),
(\ref{eq2}) and (\ref{eq3}) are all zero, each of the $2^n-1$ monomials in the set
(\ref{spanset}) is fully reduced, and hence the dimension of 
$\sCl(V,\,  \mu, \, \phi)$ is~$2^n$.  

$(b)\Rightarrow(a)$.  If $\sCl(V,\, \mu, \, \phi)$ has dimension~$2^n$, then the 
spanning set in the proof of Lemma~\ref{span} is a basis.  This basis contains the set
$\{g(x_1),\ldots , g(x_n)\}$, and the linear independence of these elements implies 
that $g$~is injective.

$(c)\Leftrightarrow(d)$. 
If $B=0$, then the coefficients in equations (\ref{eq1}), (\ref{eq2}) and 
(\ref{eq3}) are all zero and $q=0$, so (c) and (d) both hold.
Suppose $B \neq 0$. In particular, $B_{ij}\ne 0$ for some $i, j$, so the monomial 
$z_i z_j$ appears in the expression for~$q$.  It follows that $q$ is central if and 
only if $z_i z_j$ is central for all $i, j$ such that $B_{ij} \neq 0$. 
This happens if and only if, for each $k$, we have 
$(z_i z_j)z_k=z_k(z_i z_j)=\mu_{ik}\mu_{jk}z_i z_j z_k$ 
for all $i, j$ such that $B_{ij} \neq 0$; 
that is, for each $k$, we have $\mu_{ik}=\mu_{kj}$ 
whenever $B_{ij} \neq 0$.  
By condition~$\bigstar$, this is equivalent to all the coefficients in equations 
(\ref{eq1}), (\ref{eq2}) and (\ref{eq3}) being zero.
\end{proof}

\begin{corollary}\label{symmetric}  
With notation as in Theorem~\ref{tfae}, if $\sCl(V,\, \mu, \, \phi)$ has full 
dimension, then $B$~is a symmetric matrix.  
\end{corollary}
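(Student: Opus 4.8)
The plan is to read this off from Theorem~\ref{tfae} together with the standing hypothesis that $\phi$ is $\mu$-symmetric relative to $\mathcal B$. Recall that the latter means exactly $B_{ij} = \mu_{ij}B_{ji}$ for all $i, j$, and that each $\mu_{ij}$ lies in $\kk^\times$ since $\mu_{ij}\mu_{ji} = 1$. So the goal $B = B^{\mathrm{t}}$ amounts to showing that $\mu_{ij} = 1$ on the support of $B$, and that $B$ is zero off a symmetric set of indices.

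First I would dispose of the vanishing entries: if $B_{ij} = 0$, then $\mu$-symmetry gives $0 = \mu_{ij}B_{ji}$, and invertibility of $\mu_{ij}$ forces $B_{ji} = 0 = B_{ij}$. Next, suppose $B_{ij} \neq 0$. By Theorem~\ref{tfae}, full dimension of $\sCl(V,\, \mu,\, \phi)$ is equivalent to condition~$\bigstar$, so $\mu_{ik} = \mu_{kj}$ for every $k$; specializing to the repeated index $k = i$ (or $k = j$) and using $\mu_{ii} = \mu_{jj} = 1$ yields $\mu_{ij} = 1$. Then $B_{ij} = \mu_{ij}B_{ji} = B_{ji}$. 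Ranging over all $i, j$, these two cases give $B = B^{\mathrm{t}}$.

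I do not expect a genuine obstacle here; the content is entirely in the implication ``full dimension $\Rightarrow \bigstar$'' already supplied by Theorem~\ref{tfae}, and the only observation needed is that $\bigstar$, evaluated at a diagonal index, collapses the relevant off-diagonal entry of $\mu$ to $1$, so that $\mu$-symmetry degenerates to ordinary symmetry precisely on the support of $B$. (In particular this subsumes and slightly sharpens Remark~\ref{diag}.)
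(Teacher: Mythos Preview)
Your argument is correct and follows essentially the same route as the paper: full dimension $\Rightarrow$ condition~$\bigstar$ via Theorem~\ref{tfae}, then specializing $\bigstar$ at $k=i$ (or $k=j$) gives $\mu_{ij}=1$ whenever $B_{ij}\neq 0$, so $\mu$-symmetry reduces to symmetry. The paper's proof is simply a terser version of yours, omitting the explicit treatment of the trivial case $B_{ij}=0$.
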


\begin{proof} By Theorem \ref{tfae}, the hypothesis implies $\bigstar$ holds, which implies that $\mu_{ij}=1$ whenever $B_{ij}\ne 0$.  Hence, $B_{ji}=B_{ij}$ for all $i, j$.
\end{proof}

The following example shows that the converse to Corollary \ref{symmetric} does not hold.

\begin{example}\label{betweenex2}
Suppose char$(\kk) \notin \{ 2, \, 3\}$,  and let 
$\mu=\scriptsize \begin{pmatrix} 1&2\\ 1/2&1 \end{pmatrix}$ and 
$B= \scriptsize \begin{pmatrix} 0&0\\ 0&1 \end{pmatrix}$. 
Here $B$ is symmetric, but the algebra has dimension $2$, since the relations 
$x_1 x_2 = -2 x_2 x_1$ and $x_2^2=1$ imply that $x_1=0$.
\end{example}

\begin{theorem}\label{subspace}
With notation as in Definition~\ref{scliff2} and Theorem~\ref{tfae}, if 
$\sCl(V,\, \mu, \, \phi)$ is nontrivial, then $\sCl(V,\, \mu, \, \phi)$ is isomorphic 
to $\sCl(V',\, \mu', \, \restr{\phi}{V'})$, 
where $\mu'$~is the multiplicatively antisymmetric submatrix of~$\mu$ corresponding to 
a subset~$\mathcal B'$ of~$\mathcal B$, $V'$~is spanned by~$\mathcal B'$, and 
$\restr{g}{V'} : V'\to \sCl(V,\, \mu, \, \phi)$ is injective.  
\end{theorem}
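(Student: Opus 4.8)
The plan is to discard from $\mathcal{B}$ those basis vectors whose images become linearly dependent in $S:=\sCl(V,\mu,\phi)$, and then to identify the smaller skew Clifford algebra with $S$ by a dimension count. First I would choose $\mathcal{B}'\subseteq\mathcal{B}$ maximal with respect to the property that $\{g(x_i):x_i\in\mathcal{B}'\}$ is linearly independent in $S$; equivalently, $\mathcal{B}'$ is a maximal subset of $\mathcal{B}$ that is linearly independent modulo $\ker g$, so that $V':=\mathrm{span}(\mathcal{B}')$ is a complement of $\ker g$ in $V$. Consequently $\restr{g}{V'}:V'\to S$ is injective and $g(V')=g(V)$. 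Let $\mu'$ be the submatrix of $\mu$ indexed by $\mathcal{B}'$ and $\phi':=\restr{\phi}{V'}$, which is $\mu'$-symmetric relative to $\mathcal{B}'$. Since the defining relations of $\sCl(V',\mu',\phi')$ are exactly the defining relations of $S$ with both indices restricted to $\mathcal{B}'$, the composite $V'\hookrightarrow V\hookrightarrow T(V)\twoheadrightarrow S$ extends to a $\kk$-algebra homomorphism $\eta:\sCl(V',\mu',\phi')\to S$ with $\eta\circ g'=\restr{g}{V'}$, where $g':V'\to\sCl(V',\mu',\phi')$ is the canonical map; $\eta$ is surjective because $g(x_j)\in g(V')$ for every $j$. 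As $\restr{g}{V'}$ is injective, so is $g'$, whence by Theorem~\ref{tfae} we get $\dim\sCl(V',\mu',\phi')=2^{|\mathcal{B}'|}$ and, moreover, condition~$\bigstar$ holds for the data $(\mu',\phi')$.

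Because $\eta$ is a surjection with domain of dimension $2^{|\mathcal{B}'|}$, it suffices to prove $\dim S=2^{|\mathcal{B}'|}$; then $\eta$ is an isomorphism and the theorem follows. The inequality $\dim S\le 2^{|\mathcal{B}'|}$ is immediate from the argument of Lemma~\ref{span}: since $g(x_j)\in g(V)=g(V')=\mathrm{span}\{g(x_i):x_i\in\mathcal{B}'\}$ for every $j$, one may first rewrite each $g(x_j)$ with $x_j\notin\mathcal{B}'$ as a linear combination of the $g(x_i)$ with $x_i\in\mathcal{B}'$, and then sort the resulting products and delete squares; this shows the $2^{|\mathcal{B}'|}$ standard monomials $g(x_{i_1})\cdots g(x_{i_k})$ with $x_{i_1}<\cdots<x_{i_k}$ in $\mathcal{B}'$ span $S$. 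So everything reduces to the reverse inequality $\dim S\ge 2^{|\mathcal{B}'|}$, i.e.\ to the linear independence in $S$ of these standard monomials, equivalently the injectivity of $\eta$.

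I would establish $\dim S\ge 2^{|\mathcal{B}'|}$ by the Diamond-Lemma analysis of $S$ indicated before Lemma~\ref{span}: run the reduction procedure and, each time an ambiguity fails to resolve, adjoin a rule eliminating the largest generator appearing in the resulting relation; since $\dim S$ is finite this terminates in a complete reduction system. The nontriviality hypothesis enters here precisely to exclude the degenerate outcome that some ambiguity forces the relation $1=0$ (which would collapse $S$). The essential point to verify — and the step I expect to be the main obstacle — is that every relation forced by this process is \emph{linear}: the process only ever eliminates generators and never introduces a genuinely new quadratic (or higher) relation among the survivors. Granting this, the surviving generators form a maximal subset of $\mathcal{B}$ linearly independent modulo $\ker g$, necessarily of the same cardinality $|\mathcal{B}'|$, and the $2^{|\mathcal{B}'|}$ standard monomials over it are the irreducible words, hence a basis of $S$. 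Verifying that the forced relations stay linear amounts to checking a compatibility between $\ker g$ and $\mu$ — each forced relation $\sum_a c_a x_a=0$ is supported on a set on which $\mu$ is suitably homogeneous, and on which $\phi$ deviates from $\mu'$-symmetry only as prescribed by $\mu$ — properties that hold for the first-order relations coming from equations~(\ref{eq1})--(\ref{eq3}) and propagate through their closure, using that $\bigstar$ holds for $(\mu',\phi')$. Once this bookkeeping is in place, $\dim S=2^{|\mathcal{B}'|}=\dim\sCl(V',\mu',\phi')$, so $\eta$ is an isomorphism and $S\cong\sCl(V',\mu',\restr{\phi}{V'})$ with $\restr{g}{V'}$ injective, as required.
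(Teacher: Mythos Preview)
Your overall architecture is sound and close in spirit to the paper's argument: choose a maximal subset $\mathcal B'$ on which $g$ is injective, get the surjection $\eta:\sCl(V',\mu',\phi')\to S$, use $\eta\circ g'=\restr{g}{V'}$ to conclude $g'$ is injective and hence (via Theorem~\ref{tfae}) that the domain has dimension $2^{|\mathcal B'|}$, and then reduce everything to $\dim S\ge 2^{|\mathcal B'|}$. That reduction is correct, and you correctly identify that the whole difficulty lives in this last inequality.

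The gap is that you do not actually establish that inequality: the claim ``every relation forced by this process is linear'' is precisely the substance of the theorem, and your sketch of how to verify it does not work as stated. Concretely, once the Diamond Lemma produces a relation $x_j=\lambda x_i$ (case (ii) of the paper), substituting into $x_jx_k+\mu_{jk}x_kx_j=2B_{jk}$ and comparing with $x_ix_k+\mu_{ik}x_kx_i=2B_{ik}$ yields a genuine new quadratic relation among the survivors unless $\mu_{jk}=\mu_{ik}$ for every $k$ with $x_k\notin\kk x_i$. That $\mu$-compatibility is exactly what the paper spends most of its proof establishing, via a delicate argument using equations~(\ref{eq1})--(\ref{eq3}) and an auxiliary index~$p$ with $B_{ip}\ne 0\ne B_{jp}$; it also rules out the three-term possibility (case (iii)) entirely. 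Your proposal to derive this from ``$\bigstar$ holds for $(\mu',\phi')$'' cannot succeed, because $\bigstar$ on $\mathcal B'$ only constrains entries $\mu_{ab}$ with $a,b\in\mathcal B'$ and says nothing about $\mu_{jk}$ for a discarded index~$j$. So you have isolated the right statement to prove, but the verification you gesture at is both incomplete and aimed at the wrong hypothesis; the actual argument requires the case analysis the paper carries out.
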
 
\begin{proof}
If the coefficients in equations  (\ref{eq1}), (\ref{eq2}) and (\ref{eq3}) are all zero, then by Theorem~\ref{tfae}, $g$~is injective and we take $V'=V$ and $\mu'=\mu$.
 
Assume from now on that the coefficients in equations  (\ref{eq1}), (\ref{eq2}) and (\ref{eq3}) are not all zero.  We proceed by induction on the size of~$\mathcal B$.  
If $|\mathcal B|=1$, then $\sCl(V,\, \mu, \, \phi)=\kk [x]/ \langle x^2-b\rangle$ for 
some $b\in\kk$.  This algebra has basis $\{1, \, x\}$ and dimension~$2$, and 
clearly $g$~is injective. 
 
Now let $\mathcal B= \{x_1,\ldots , x_n\}$ where $n>1$. 
Assume inductively that a skew Clifford algebra associated to a vector space of 
dimension less than~$n$ has dimension~$2^l$ for some $l<n$ and is isomorphic to 
a skew Clifford algebra associated with the span of a subset of~$\mathcal{B}$ on 
which $g$~is injective.     
Since equations (\ref{eq1}), (\ref{eq2}) and (\ref{eq3})  include at least one 
nonzero coefficient, there are three possible consequences in 
$\sCl(V,\, \mu, \, \phi)$: 
\begin{enumerate}
\item[(i)] $x_j = 0$ for some $j$; 
\item[(ii)] $x_k\ne 0$ for all $k$, but $x_j \in \kk^\times x_i$ for some $i \neq j$;
\item[(iii)] (i) and (ii) are false and equation~(\ref{eq3}) implies 
$x_k \in \kk^\times x_i + \kk^\times x_j$ for some $i$, $j$, $k$.  
\end{enumerate}

(i) First we consider the possibility that  $x_j = 0$ in $\sCl(V,\, \mu, \, \phi)$ 
 for some~$j$.  Since $\sCl(V,\, \mu, \, \phi)$ is nontrivial, each relation of the form 
$x_j x_m+\mu_{jm}x_m x_j=2B_{jm}$ vanishes. It follows that $B_{jm}=0 = B_{mj}$ 
for all $m$. Hence, $x_j$ plays no role in $\sCl(V,\, \mu, \, \phi)$, so we may replace 
$\mathcal B$ with the subset $\mathcal B \setminus \{x_j\}$.  The induction
hypothesis implies that the algebra has dimension~$2^l$ for some $l\le n-1$ and 
is equal to $\sCl(V',\, \mu', \, \restr{\phi}{V'})$ where $\restr{g}{V'}$~is injective 
and $V'$~is spanned by a subset of $\mathcal B\setminus\{x_j\}$.  
   
(ii) Next we assume (ii) holds.  Suppose $x_j=\lambda x_i$ where 
$\lambda\in\kk^\times$. If $x_k \in \kk^\times x_i$ for all $k$, 
then there exists $b\in\kk$ such that $\sCl(V,\, \mu,\, \phi) \cong 
\kk [x]/ \langle x^2-b\rangle$, which has dimension $2$ and the result follows. 
So we assume that there exists~$k$ such that $x_k\notin\kk x_i$.    We also
assume, without loss of generality, that $j > i$. The relation 
$x_j = \lambda x_i$ introduces additional ambiguities to be resolved via 
Bergman's Diamond Lemma; namely, $x_j x_i$, $x_j^2$, $x_j x_k$, for all $k < j$, 
and $x_k x_j$ for all $k > j$. Any relations produced, by resolving these
ambiguities, that involve only scalars are resolvable, since the algebra is
nontrivial. The other relations produced by resolving these ambiguities have the
form
\begin{equation}\label{bad}
(\mu_{jk}-\mu_{ik}) \lambda x_k x_i= 2(B_{jk}-\lambda B_{ik}),
\end{equation}
for all $k < j$, where $x_k \notin \kk x_i$, 
and 
\begin{equation}\label{bad2}
(\mu_{kj}-\mu_{ki}) \lambda x_i x_k= 2(B_{kj}-\lambda B_{ki}),
\end{equation}
for all $k > j$, where $x_k \notin \kk x_i$.  
Multiplying (\ref{bad})  on the left by $x_k$  yields 
$(\mu_{jk}-\mu_{ik})\lambda B_{kk} x_i=2 (B_{jk}-\lambda B_{ik} )x_k$.  
As $x_k \notin \kk x_i$, it follows that 
\begin{equation}\label{this}
 B_{jk}=\lambda   B_{ik}
 \end{equation}
and that $B_{kk}(\mu_{jk}-\mu_{ik})=0$, 
for all $k < j$ such that $x_k \notin \kk x_i$.
Similarly, multiplying (\ref{bad}) on the right by $x_j$ (respectively, $x_i$) 
implies that 
$B_{jj}( \mu_{jk}-\mu_{ik})=0=B_{ii}(\mu_{jk}-\mu_{ik})$ 
for all $k < j$ such that $x_k \notin \kk x_i$. Moreover, similar computations
using (\ref{bad2}) yield analogous formulae
for all $k > j$ such that $x_k \notin \kk x_i$.

Hence, if $\mu_{jk} = \mu_{ik}$ for all $k$ with the property that 
$x_k \notin \kk x_i$,
then all the ambiguities that involve $x_j$ are resolvable, which means that 
the relation $x_j = \lambda x_i$  contributes no additional relations. In this
case, it then follows that $x_j$ is redundant in $\sCl(V,\, \mu, \, \phi)$, and so 
the result follows as in~(i). Thus, in order to complete the proof of case~(ii),
it suffices to prove that $\mu_{jk} = \mu_{ik}$ if $x_k \notin \kk x_i$.

Suppose, for a contradiction, that $x_k \notin \kk x_i$, $k < j$ and 
$\mu_{jk}\ne\mu_{ik}$.
It follows that $0=B_{ii}=B_{jj}=B_{kk}$.  Since $B_{ii}=0$, we obtain 
\[2B_{ij}=x_ix_j+\mu_{ij}x_jx_i=\lambda(1+\mu_{ij})x_i^2= 
\lambda(1+\mu_{ij})B_{ii}=0.\]  
Considering equation~(\ref{eq1})  for the pairs $(i,\, k)$ and $(j,\, k)$, and
noting that the right side of the equation is zero in each case, we find that 
$(1-\mu_{ik})B_{ik}=0=(1-\mu_{jk})B_{jk}$.  As $\mu_{ik}$ and $\mu_{jk}$ cannot 
both be equal to $1$, we obtain that $B_{jk}$ or $B_{ik}$ is zero, and so, 
by~(\ref{this}), both are zero.    This means that $B_{cd} = 0$ for all $c, d \in
\{i, \, j, \, k\}$. It follows that, in order to have $x_j \in \kk^\times x_i$, 
there exists $p \notin \{i, \, j, \, k\}$ such that $B_{ip}\ne 0\ne B_{jp}$.  
Considering equation~(\ref {eq1}) with the subscript pairs $(i,\, p)$ and $(j,\, p)$, 
and noting that the right side of the equation is zero in each case, we find that 
$\mu_{ip}=1=\mu_{jp}$.   Similarly, considering equation~(\ref{eq3}) for the subscript 
triple $(i, \, p, \, k)$, and noting that the middle term vanishes since 
$B_{ik}=0$, and using the assumption that $x_k \notin \kk x_i$ and the fact that 
$B_{ip}\ne 0$, we obtain $1-\mu_{pk}\mu_{ik}=0$.  However, a similar discussion 
using equation~(\ref{eq3}) with the subscript triple $(p, \, j, \, k)$ implies that
$1-\mu_{pk}\mu_{jk}=0$, which is a contradiction. Similarly, a contradiction is
obtained if one assumes $k > j$ instead.

(iii) Lastly, we assume that (i) and (ii) are false and that 
equation~(\ref{eq3}) implies $x_k \in \kk^\times x_i + \kk^\times x_j$ for some 
$i$, $j$, $k$. In particular, all three coefficients in equation~(\ref{eq3}) are 
nonzero for the subscript triple $(i, \, j, \, k)$, and 
all the coefficients in equations~(\ref{eq1}) and~(\ref{eq2})  
are zero.  Applying this to the subscript pairs $(i, \, j)$ and $(j, \, k)$, 
we obtain $B_{ij}(\mu_{ij} -1) = 0 = B_{jk}(\mu_{jk} -1)$.  It follows that 
at least one of the coefficients in equation~(\ref{eq3}) is zero, which is a
contradiction. 
\end{proof}

\begin{corollary}
With notation as in Definition~\ref{scliff2}, if 
$\sCl(V,\, \mu,\, \phi) \neq \{0\}$, then\\
$\dim(\sCl(V,\, \mu,\, \phi)) = 2^j$, where $1\le j\le \dim(V)$.
\end{corollary}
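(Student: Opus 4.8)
The plan is to read this corollary off from Theorems~\ref{subspace} and~\ref{tfae}. Assume $\dim(V)\ge 1$, since if $V=0$ then $\sCl(V,\,\mu,\,\phi)=\kk$ and there is nothing to prove. As $\sCl(V,\,\mu,\,\phi)$ is nontrivial, Theorem~\ref{subspace} yields an isomorphism $\Psi:\sCl(V',\,\mu',\,\restr{\phi}{V'})\xrightarrow{\ \sim\ }\sCl(V,\,\mu,\,\phi)$, where $V'$ is spanned by a subset $\mathcal{B}'$ of $\mathcal{B}$ and $\restr{g}{V'}:V'\to\sCl(V,\,\mu,\,\phi)$ is injective. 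The first point I would pin down is that $\Psi$ is precisely the isomorphism of the two presentations that fixes the common generators, i.e.\ $\Psi\bigl(g'(x_i)\bigr)=g(x_i)$ for each $x_i\in\mathcal{B}'$, where $g':V'\to\sCl(V',\,\mu',\,\restr{\phi}{V'})$ is the canonical map. Since $\restr{g}{V'}$ and $\Psi\circ g'$ are linear and agree on $\mathcal{B}'$, we get $\restr{g}{V'}=\Psi\circ g'$, so injectivity of $\restr{g}{V'}$ forces injectivity of $g'$.

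Next I would invoke Theorem~\ref{tfae} (the implication $(a)\Rightarrow(b)$) for the smaller algebra: since $g'$ is injective, $\dim\sCl(V',\,\mu',\,\restr{\phi}{V'})=2^{\dim(V')}$. Setting $j:=\dim(V')=|\mathcal{B}'|$ and transporting along $\Psi$ gives $\dim\sCl(V,\,\mu,\,\phi)=2^{j}$, and $j\le\dim(V)$ because $\mathcal{B}'\subseteq\mathcal{B}$.

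The remaining point is that $j\ge 1$, i.e.\ that $\dim\sCl(V,\,\mu,\,\phi)\ne 1$ when $V\ne 0$; this is the only place a brief extra remark is needed. It follows from the structure of the proof of Theorem~\ref{subspace}: there the generators are discarded one at a time and the recursion terminates at the single-generator base case $\kk[x]/\langle x^2-b\rangle$, which has dimension $2$ and nonempty basis, so $\mathcal{B}'$ is never empty once $V\ne 0$, whence $j=|\mathcal{B}'|\ge 1$. I do not anticipate any real obstacle here: all of the mathematical content is already carried by Theorem~\ref{subspace}, and this corollary is essentially its numerical consequence; the only bookkeeping is the passage from injectivity of $\restr{g}{V'}$ to injectivity of $g'$, which is exactly what is needed to apply Theorem~\ref{tfae} to $\sCl(V',\,\mu',\,\restr{\phi}{V'})$.
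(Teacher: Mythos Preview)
Your proposal is correct and follows the same approach as the paper: invoke Theorem~\ref{subspace} to pass to a subspace~$V'$ on which the canonical map is injective, then apply Theorem~\ref{tfae} to conclude the dimension is $2^{\dim(V')}$. The paper's proof is a terse two-liner that leaves implicit the points you spell out---the passage from injectivity of $\restr{g}{V'}$ to injectivity of $g'$, and the bound $j\ge 1$---so your version is simply a more careful rendering of the same argument.
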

\begin{proof}
By Theorem~\ref{subspace}, the map $g$ is injective on a subspace $V'$ of $V$.  Let 
$j=\dim(V')$ and apply Theorem \ref{tfae}.
\end{proof}

Since, by Theorem~\ref{subspace},
any nontrivial skew Clifford algebra can be identified with a skew Clifford 
algebra of full dimension, we will often find it convenient to assume that the 
skew Clifford algebra under discussion has full dimension.  When we do so,
Corollary~\ref{symmetric} implies that we may also assume that  $\phi$ is a
symmetric bilinear form, and we can define the corresponding quadratic form on~$V$
by $\Phi(x):=\phi(x, \, x)$ for all $x\in V$.

\bigskip 
\bigskip 


\section{Gradings and  Deformations}\label{gradings}

Skew Clifford algebras have a $\mathbb Z_2$-grading which is obtained by assigning each  $x_i$ an odd degree, and this makes each skew Clifford algebra a superalgebra  that can be decomposed into the direct sum of its odd and even parts.  As with classical Clifford algebras, the even part forms a subalgebra for which the odd part is a module.   However, skew Clifford algebras are typically not supercommutative.

Classical Clifford algebras are quantizations  of the exterior algebra in that,
for a given vector space $V$, each classical Clifford algebra is  isomorphic as a
vector space to the exterior algebra on~$V$, although the two algebras have
different multiplications.  As a consequence of Theorem~\ref{subspace}, each
nontrivial skew Clifford algebra $\sCl(V,\, \mu, \, \phi)$  is a quantization of 
the quantum exterior algebra~$\Lambda_{\mu'}(V')$, where $V'$~is a subspace 
of~$V$ and $\mu'$~is the corresponding multiplicatively antisymmetric submatrix 
of~$\mu$.   

Any skew Clifford algebra $R=\sCl(V,\, \mu,\, \phi)$ is also a nonhomogeneous
quadratic algebra  in the sense of Braverman and Gaitsgory in~\cite{B&G}, which 
is obtained by deforming the relations of the associated quantum exterior algebra.
Specifically, consider the natural filtration $F^iT=\{\oplus T^j (V): j\le i\}$
on the tensor algebra  $T(V)$, and let $\pi: F^2T\to V\otimes V$ denote the   
projection onto $V \otimes V$. If $P\subset F^2T$ is a subspace such 
that $R={T(V)}/{\la P\ra}$, then the quantum exterior algebra~$\Lambda_\mu(V)$ can 
be represented as~${T(V)}/{\la \pi(P)\ra}$, and every $\Z_2$-graded nonhomogeneous 
quadratic algebra obtained by deforming the relations of~$\Lambda_\mu(V)$ is
a skew Clifford algebra.

Moreover, $R$ inherits a filtration from $T(V)$, and so we can consider the
associated graded algebra~$\gr(R)$. Following~\cite{B&G}, $R$ is called a 
PBW~deformation of~$\Lambda_\mu(V)$ if the natural projection 
$\Lambda_\mu(V)\to \gr(R)$ is an isomorphism.
In the case of classical Clifford algebras, the associated graded algebra is
isomorphic to the exterior algebra, making every Clifford algebra a PBW
deformation  of the corresponding exterior algebra.  In fact, classical Clifford
algebras are exactly the set of  PBW deformations of  exterior
algebras~(\cite{Roy}).   It is natural to ask whether skew Clifford algebras are PBW
deformations of quantum exterior algebras; i.e., is the associated graded algebra of a skew Clifford algebra the same as the quantum exterior algebra obtained by 
replacing~$\phi$ with~$0$?   The next result shows that every nontrivial skew 
Clifford algebra is a PBW~deformation of some quantum exterior algebra. 

\begin{proposition}\label{PBW}
With notation as in Definition~\ref{scliff2}, let $\sCl(V,\, \mu, \, \phi)$ be  
nontrivial. Any $\Z_2$-graded PBW deformation of the quantum exterior 
algebra $\Lambda_{\mu}(V)$ is a non-trivial skew Clifford algebra, and  $\sCl(V,\, \mu, \, \phi)$ is a PBW~deformation of a quantum exterior 
algebra~$\Lambda_{\mu'}(V')$, where $V'$~is a subspace of~$V$ and $\mu'$~is the 
corresponding multiplicatively antisymmetric submatrix of~$\mu$. 
\end{proposition}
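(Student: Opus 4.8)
The plan is to treat the two assertions separately, using Theorems~\ref{tfae} and~\ref{subspace}, together with the fact (Example~\ref{fdex2}) that $\Lambda_\mu(V)$ has dimension $2^{\dim V}$, as the only real inputs; no appeal to the Braverman--Gaitsgory PBW theorem is needed, since the relevant dimension counts have already been done.

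For the first assertion, I would begin by pinning down the defining relations of a $\Z_2$-graded PBW deformation $R = T(V)/\la P\ra$ of $\Lambda_\mu(V)$. Here $P \subset F^2T$ is a subspace spanned by $\Z_2$-homogeneous elements with $\pi(P)$ equal to the span of the relations $x_ix_j + \mu_{ij}x_jx_i$ of $\Lambda_\mu(V)$. Since each $x_i$ is odd, the degree-$2$ and degree-$0$ components of an element of $F^2T$ are $\Z_2$-even while its degree-$1$ component is $\Z_2$-odd; hence a $\Z_2$-homogeneous lift of $x_ix_j + \mu_{ij}x_jx_i$ can have no linear term, so $P$ is spanned by elements $x_ix_j + \mu_{ij}x_jx_i - c_{ij}$ with $c_{ij}\in\kk$. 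Comparing the relation indexed by $(i,j)$ with $\mu_{ij}$ times the relation indexed by $(j,i)$ forces $c_{ij}=\mu_{ij}c_{ji}$, so $\phi(x_i,x_j):=\tfrac12 c_{ij}$ defines a $\mu$-symmetric bilinear form and $R=\sCl(V,\mu,\phi)$. Finally, being a PBW deformation gives $\gr(R)\cong\Lambda_\mu(V)\ne\{0\}$, so $R$ is nontrivial.

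For the second assertion, given a nontrivial $\sCl(V,\mu,\phi)$, I would apply Theorem~\ref{subspace} to obtain a subset $\mathcal B'\subseteq\mathcal B$ with span $V'$ and corresponding multiplicatively antisymmetric submatrix $\mu'$ of $\mu$ such that $\sCl(V,\mu,\phi)\cong\sCl(V',\mu',\restr{\phi}{V'})$ and $\restr{g}{V'}$ is injective; Theorem~\ref{tfae} then gives $\dim\sCl(V',\mu',\restr{\phi}{V'})=2^{\dim V'}$. Equipping $\sCl(V',\mu',\restr{\phi}{V'})$ with the filtration inherited from $T(V')$ and passing to leading terms, each defining relation $x_ix_j+\mu'_{ij}x_jx_i-2\phi(x_i,x_j)$ contributes $x_ix_j+\mu'_{ij}x_jx_i$ to the associated graded algebra, so there is a natural surjection $\Lambda_{\mu'}(V')\twoheadrightarrow\gr(\sCl(V',\mu',\restr{\phi}{V'}))$. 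Since $\dim\gr(\sCl(V',\mu',\restr{\phi}{V'}))=\dim\sCl(V',\mu',\restr{\phi}{V'})=2^{\dim V'}=\dim\Lambda_{\mu'}(V')$, this is a surjection between finite-dimensional spaces of equal dimension, hence an isomorphism. Thus $\sCl(V',\mu',\restr{\phi}{V'})$ is a PBW deformation of $\Lambda_{\mu'}(V')$, and therefore so is $\sCl(V,\mu,\phi)$ under the identification provided by Theorem~\ref{subspace}.

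The step I expect to need the most care is the first: one must fix the meaning of a $\Z_2$-graded PBW deformation so that the defining subspace $P$ is spanned by $\Z_2$-homogeneous elements, since this is what actually kills the linear terms and allows one to recognize $R$ as a skew Clifford algebra. Once that is settled, both assertions reduce to the dimension bookkeeping already carried out in Theorems~\ref{tfae} and~\ref{subspace}.
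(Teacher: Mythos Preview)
Your argument is correct, and it follows a genuinely different path from the paper's. The paper proves the second assertion by invoking the Jacobi-type PBW criterion of \cite[Theorem~4.2]{cassidy&shelton}: since $\Lambda_{\mu'}(V')$ is Koszul, $T(V')/\la P\ra$ is a PBW deformation if and only if $(V'P+PV'+P)\cap F^2T(V')\subset P$, and the paper then identifies this containment with the resolvability of the Diamond-Lemma ambiguities, hence with condition~$\bigstar$ and with full dimension via Theorem~\ref{tfae}. You bypass the external PBW theorem entirely: once Theorem~\ref{subspace} hands you a full-dimensional $\sCl(V',\mu',\phi')$, the surjection $\Lambda_{\mu'}(V')\twoheadrightarrow\gr\sCl(V',\mu',\phi')$ is forced to be an isomorphism by the equality of dimensions. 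This is shorter and more self-contained. What the paper's route buys is a sharper conclusion for the first assertion: running their equivalences backward shows that a $\Z_2$-graded PBW deformation of $\Lambda_\mu(V)$ is not merely nontrivial but of \emph{full} dimension~$2^{\dim V}$, so that nontrivial skew Clifford algebras are \emph{exactly} the $\Z_2$-graded PBW deformations of quantum exterior algebras. Your argument, as written, only yields nontriviality, which is all the proposition literally claims; but the same dimension count you use for the second assertion, applied now with $V'=V$, immediately upgrades this to full dimension as well.

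One small point of care in your first assertion: the sentence ``comparing the relation indexed by $(i,j)$ with $\mu_{ij}$ times the relation indexed by $(j,i)$'' presupposes that $P$ contains separate lifts of both, whereas $x_ix_j+\mu_{ij}x_jx_i$ and $x_jx_i+\mu_{ji}x_ix_j$ are already proportional in $\pi(P)$. The clean way to phrase it is: from $\Z_2$-homogeneity you get $P\subset (V\otimes V)\oplus\kk$, and from $\gr(R)\ne 0$ you get $P\cap\kk=0$, so $\pi|_P$ is a bijection onto $\pi(P)$; choosing the basis $\{x_ix_j+\mu_{ij}x_jx_i : i\le j\}$ of $\pi(P)$ then gives unique constants $c_{ij}$ for $i\le j$, and one \emph{defines} $c_{ji}:=\mu_{ji}c_{ij}$ to make $\phi$ $\mu$-symmetric. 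This is exactly the caveat you flag in your closing paragraph.
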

\begin{proof}
By Theorem~\ref{subspace}, $\sCl(V,\, \mu, \, \phi)\cong \sCl(V',\, \mu', \, \phi')$, 
where $\mu'$~is the multiplicatively antisymmetric submatrix of~$\mu$ corresponding 
to the subspace~$V'$ of~$V$,  
$\phi'$ is the corresponding restriction of~$\phi$,  and $\sCl(V',\, \mu', \, \phi')$ 
has full dimension.  Let  $P\subset F^2T(V')$ be a subspace such that 
$\sCl(V',\, \mu', \, \phi') = T(V')/\la P \ra$.  It follows that $P$~is spanned by 
elements of the form 
$x_i\otimes x_j+\mu_{ij}x_j\otimes x_i-2\phi(x_i, \, x_j)\cdot 1$.  
The quantum exterior algebra $\Lambda_{\mu'}(V')$ can be expressed as the 
quotient $T(V')/\la \pi (P) \ra$, which is the Koszul dual of a skew polynomial 
ring, and hence is Koszul.
This fact, combined with the Jacobi-type conditions given 
in \cite[Theorem~4.2]{cassidy&shelton}, tells us  that $T(V')/\la P \ra$ is a 
PBW~deformation of  $T(V')/\la \pi (P) \ra$ if and only if 
$(V'P+PV'+P) \cap F^2T(V')$ is contained in~$P$.  Since elements of~$P$ have 
even degree,  $V'P+PV'$ has odd degree, and consequently the containment  
$(V'P+PV'+P) \cap F^2T(V')\subset P$ is equivalent to $(V'P+PV') \cap F^2T(V')=0$.   
It follows that $V'P+PV'$ contains degree-one elements  if and only if the relations 
in~$P$ allow us to reduce a cubic monomial in two distinct ways.
Thus, the condition $(V'P+PV') \cap F^2T(V')=0$ is the same as requiring the 
ambiguities of $P$ to be resolvable. However, the ambiguities are resolvable if 
and only if the coefficients in equations (\ref{eq1}), (\ref{eq2}) and (\ref{eq3}) 
are all zero, and Theorem~\ref{tfae} tells us that this is equivalent to
$\sCl(V',\, \mu', \, \phi')$ having full dimension. We conclude  that 
$\sCl(V',\, \mu', \, \phi')$ is a PBW~deformation of $\Lambda_{\mu'}(V')$. 

Any $\Z_2$-graded  deformation $T(V)/\la P \ra$ of a quantum exterior 
algebra $T(V)/\la \pi (P) \ra$  is a skew Clifford algebra  where elements of~$P$ 
have even degree. The reversibility of the  conditions above implies that 
if $T(V)/\la P \ra$  is a PBW deformation, then the ambiguities of $P$ are 
resolvable and  $T(V)/\la P \ra$ has full dimension. Thus, non-trivial 
skew Clifford algebras are exactly the set of $\Z_2$-graded PBW deformations of 
quantum exterior algebras. 
\end{proof}

\bigskip


\section{Homogenizations}\label{homogenization}

In this section we show how skew Clifford algebras can be homogenized to create $\N$-graded   algebras, and we identify the skew Clifford algebras that correspond to $\N$-graded algebras with good homological  properties.

A skew Clifford algebra $R=\sCl(V,\, \mu, \, \phi)$ of full dimension can be homogenized to create an $\N$-graded algebra $A$   by adjoining a single degree-two central 
generator~$y$.  We set
$$A:=\frac{\kk\la X_1,\ldots , X_n\ra[y]}{\la X_iX_j+\mu_{ij}X_jX_i-2B_{ij}y : 1\le 
i, j\le n\ra},$$
  where the square brackets indicate that $y$~is central in this algebra.
Clearly $A/(y-1)A\cong R$ and $A/yA\cong \Lambda_\mu(V)$. By Proposition~\ref{PBW}, 
$\gr(R) \cong \Lambda_\mu(V)$, and so, by \cite[Theorem~1.3]{cassidy&shelton}, 
$y$~is a regular element in~$A$. 
 It is also clear that if the matrix $B$~is not identically zero, then $y$~is a 
redundant generator, so that $A$~is in fact generated by degree-one elements.  In 
this case, it turns out that  the centrality of~$y$ is a consequence of the 
degree-two relations, as the next theorem shows.

\begin{theorem}  If $R$ has full dimension and $B \neq 0$, then $A$~is a 
quadratic algebra.
\end{theorem}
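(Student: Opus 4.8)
The plan is to show that the two-sided ideal defining $A$ already contains, as consequences of the quadratic relations alone, all the relations $yX_k - X_k y$ that assert centrality of $y$; since $B \neq 0$ we may then solve for $y$ in terms of a quadratic expression and conclude $A$ is generated in degree one, hence quadratic. Concretely, fix $i,j$ with $B_{ij}\neq 0$; then $2B_{ij}\,y = X_iX_j + \mu_{ij}X_jX_i$ in $A$, so for every $k$ the element $y X_k - X_k y$ equals $(2B_{ij})^{-1}$ times the image of $(X_iX_j+\mu_{ij}X_jX_i)X_k - X_k(X_iX_j+\mu_{ij}X_jX_i)$ in the free algebra $\bbk\la X_1,\dots,X_n\ra$ modulo the quadratic relations $X_aX_b+\mu_{ab}X_bX_a = 2B_{ab}y$. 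So it suffices to check that this degree-three element lies in the two-sided ideal generated by those quadratic relations.

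First I would observe that, because $R = \sCl(V,\mu,\phi)$ has full dimension, Theorem~\ref{tfae} applies, so condition~$\bigstar$ holds: $B_{ij}\neq 0$ forces $\mu_{ik}=\mu_{kj}$ for all $k$. This is exactly the numerical input that will make the cubic rewriting consistent. The key computation is the standard ``diamond'' reduction of the monomial $X_iX_jX_k$ using the quadratic relations in two ways — moving $X_k$ leftward past $X_j$ then $X_i$, versus moving $X_i$ rightward past $X_j$ then $X_k$ — which is precisely the resolution of the initial ambiguity $X_kX_jX_i$ (or its mirror) already analyzed in the derivation of equations~(\ref{eq1})–(\ref{eq3}). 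Full dimension guarantees via Theorem~\ref{tfae}(c) that the coefficients there vanish, so these two reductions agree, and comparing them yields an identity in $A$ that rearranges to $y X_k = X_k y$. The same argument, applied with the diagonal ambiguities $X_j^2X_i$ and $X_jX_i^2$ when $i=k$ or $j=k$, handles the remaining cases; here one also uses that equations~(\ref{eq1}),(\ref{eq2}) have zero coefficients. Thus $yX_k - X_k y \in \la \pi(P)\text{-type relations}\ra$ for all $k$, i.e. centrality of $y$ follows from the quadratic relations.

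Having established that $y$ is central modulo the quadratic relations, and since $B\neq 0$ means some $2B_{ij}y = X_iX_j+\mu_{ij}X_jX_i$ with $2B_{ij}\in\bbk^\times$, the generator $y$ is expressible as a degree-two word in the $X$'s, so $A$ is generated by $X_1,\dots,X_n$. Eliminating $y$ from the presentation, the original relations $X_aX_b+\mu_{ab}X_bX_a - 2B_{ab}y$ become quadratic relations in the $X$'s alone (substituting the degree-two expression for $y$), and the centrality relations for $y$, being consequences of these, add nothing new in higher degree. Hence $A$ admits a presentation by a degree-one generating space with purely quadratic relations, so $A$ is a quadratic algebra.

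I expect the main obstacle to be the bookkeeping in the cubic computation: one must carefully match the two rewritings of $X_iX_jX_k$ and read off, from the requirement that the two sides coincide (guaranteed by full dimension via equations~(\ref{eq1})–(\ref{eq3}) having vanishing coefficients), exactly the commutator $yX_k - X_ky$ — keeping track of the $\mu$-scalars and of the cases where two or three of the indices collide. The conceptual point, however, is clean: the ambiguity-resolution conditions that characterize full dimension are the same conditions that force $y$ to be central as a consequence of the quadratic relations, so no genuinely new argument beyond the Diamond Lemma analysis already carried out is needed.
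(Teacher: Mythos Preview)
Your approach is correct and essentially the same as the paper's: both arguments show that the quadratic relations alone force $y$ to commute with every $X_k$, using condition~$\bigstar$ (equivalently, the vanishing of the coefficients in equations~(\ref{eq1})--(\ref{eq3})) to control the cubic rewriting, and then observe that $y$ is a redundant generator since $B\neq 0$. The paper organizes the computation by splitting into the cases $B_{ii}\neq 0$ for some~$i$ (where $y$ is a scalar times $X_i^2$) versus $B_{ii}=0$ for all~$i$ (where $y$ is an off-diagonal anticommutator), and then checks $X_ky=yX_k$ directly in each case; you instead phrase the whole thing as the homogenized version of the Diamond-Lemma ambiguity analysis.

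One small imprecision to watch: for distinct $i,j,k$ with $B_{ij}\neq 0$, the cubic rewriting of $(X_iX_j+X_jX_i)X_k$ versus $X_k(X_iX_j+X_jX_i)$ does \emph{not} literally ``rearrange to $yX_k=X_ky$'' on its own. Under~$\bigstar$ it yields
\[
B_{ij}\,[y,X_k] \;=\; -\,B_{ik}\,[y,X_j]\;-\;B_{jk}\,[y,X_i],
\]
a relation among three commutators. You therefore need to establish $[y,X_i]=[y,X_j]=0$ \emph{first} (these are the repeated-index cases you mention, and they do follow easily since $X_i^2=B_{ii}y$ and $B_{ii}=0$ or directly when $y$ is a square), and only then deduce $[y,X_k]=0$ for the remaining~$k$. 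Make that logical ordering explicit and the argument is complete.
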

 
\begin{proof}  We show that the quadratic relations in~$A$ imply that $y$ is central.
Firstly, suppose $B_{ii} \neq 0$ for some~$i$.  Without loss of
generality, take $i = 1$, so $y=X_1^2$ in~$A$. Since $R$~has full dimension, 
condition~$\bigstar$ implies that $\mu_{1k}^2=1$ for all~$k$.  It follows that 
$X_k y=y X_k+ 2 B_{k1}(1-\mu_{k1}) X_1^3$ for all~$k$.  However, 
condition~$\bigstar$ implies that  $B_{k1}(1-\mu_{k1})=0$, and so   
$X_ky=yX_k$ for all $k$.
 
Now suppose $B_{ii} = 0$ for all $i$.  Since $B$~is nonzero,
we may assume, without loss of generality, that $2B_{12}=1$, so that $\mu_{12}=1$
(by $\bigstar$) and $y=X_1X_2+X_2X_1$ in $A$.     If $B_{k1}\ne 0$ for some $k\ne
1$, then $2B_{k1}y=X_1X_k+X_kX_1$, so that
$X_ky=(2B_{k1})^{-1}X_k(X_1X_k+X_kX_1)=(2B_{k1})^{-1}X_k
X_1X_k=(2B_{k1})^{-1}(X_1X_k+X_kX_1)X_k=yX_k$.  Similarly, if $B_{k2}\ne 0$ for
some $k\ne 2$, then $X_k$ commutes with~$y$.  If   $B_{k1}= 0=B_{k2}$ for some
$k\ge 3$, then $X_ky=\mu_{k1}\mu_{k2}yX_k$, and since $B_{12}\ne 0$, 
condition~$\bigstar$ implies that $\mu_{k1}\mu_{k2}=1$.
\end{proof}
    
To relate skew Clifford algebras of full dimension to the graded skew Clifford
algebras studied in~\cite{set}, we construct a nonstandard homogenization of~$R$
by adjoining $n$ degree-two central elements $y_1,\ldots , y_n$.  This new algebra 
will depend on a choice of $\mu$-symmetric matrices $M_1, \ldots , M_n \in 
M(n, \, \kk)$ that satisfy $\sum_{k=1}^n(M_k)_{ij} =2B_{ij}$ for all $i, j$.  
We  define the $\N$-graded $\kk$-algebra  $A(n)$ as
\begin{equation}\label{nonstand}
A(n):=\frac{\kk\la X_1,\ldots , X_n\ra[y_1,\ldots , y_n]}{\la X_iX_j+\mu_{ij}X_jX_i-\sum_{k=1}^n(M_k)_{ij}y_k : 1\le i, j\le n\ra},
\end{equation}
 i.e. $A(n)$ has  $n(2n-1)$ relations that make $y_k$ central in $A(n)$ for all $k$, 
and another $({n^2+n})/2 $ relations derived from the   relations in $R$.   
 The algebra $R$ is a ``dehomogenization'' of $A(n)$  in that $A(n)/\la
 y_1-1,\ldots , y_n-1\ra \cong R$.  
The algebra $A(n)$ is  an {\it iterated central extension} of $\Lambda_\mu(V)$ in
that each $y_k$ is central in~$A(n)$ and 
$A(n)/\la y_1,\ldots ,  y_n\ra\cong \Lambda_\mu(V)$. 
\begin{figure}[h] 
 \center{
\begin{tikzpicture}
\matrix (m) [matrix of math nodes, row sep=1em,
column sep=2em, text height=1.5ex, text depth=0.25ex]
{& &     R \  \   \\
A(n)& A &  \\ 
&  & \   \ \Lambda_\mu(V)   \\
};
\path[->>]  (m-2-1) edge     (m-2-2) ;
 \path[->>]  (m-2-2) edge     (m-1-3);
\path[->>] (m-2-2) edge    (m-3-3);
\end{tikzpicture}}
\end{figure}
   Note that, by construction, $X_j^2$ is central in $A(n)$ for all $j$. Moreover, 
$A(n)$ maps onto~$A$, and, as is the case with~$A$, it is possible for $A(n)$~to 
be a quadratic algebra generated by degree-one elements; Theorems~\ref{quotient} 
and \ref{ASreg} address this possibility.
   
The algebra $A(n)$ is a graded skew Clifford algebra, as defined in~\cite{set}.

\begin{definition} \label{GSCA} \cite{set}
 Fix $\mu$-symmetric matrices $M_1, \ldots , 
M_n \in M(n, \, \kk)$. 
A {\em graded skew Clifford algebra} 
associated to $\mu$ and $M_1$, $\ldots ,$ $M_n$ is a graded $\kk$-algebra
on degree-one generators $x_1, \ldots , x_n$ and on degree-two generators
$y_1, \ldots , y_n$, with defining relations as follows:
\begin{enumerate}
\item[(a)] degree-two relations of the form $ x_i x_j + \mu_{ij} x_j x_i = \sum_{k=1}^n (M_k)_{ij} y_k$
           for all $i, j \in\{1, \ldots , n\}$, and
\item[(b)] degree-three and degree-four relations (possibly obtainable from the quadratic relations in (a)) that guarantee the existence of a normalizing sequence $\{ r_1, \ldots , r_n\}$
           of homogeneous elements of degree two that span 
	   $\kk y_1 + \cdots + \kk y_n$. 
\end{enumerate}
\end{definition} 

The notion of graded skew Clifford algebra is a generalization of the notion of 
graded Clifford algebra (cf.\ \cite{set,lebruyn}).  

We now show that if $\phi\ne 0$, then the skew Clifford algebra $R$ is the image
of a graded skew Clifford algebra that is generated by degree-one elements.
Consider a skew Clifford algebra $\sCl(V, \, \mu, \, \phi)$, where $V$~has
basis $\mathcal B = \{x_1,\ldots , x_n\}$ and $\mu$-symmetric bilinear 
form~$\phi$ with respect to~$\mathcal B$, and let 
$D = (D_{ij})$~be the $\mu$-symmetric matrix given by $D_{ij}=2\phi(x_i,\, x_j)$.

\begin{lemma}\label{partition}
Suppose $\mu \in M(n, \, \kk)$ is multiplicatively antisymmetric  and 
$D\in M(n, \, \kk)$ is $\mu$-symmetric.  If $D \neq 0$, then 
$D=\sum_{i=1}^n M_i$, where $M_1, \ldots , M_n$ are linearly independent 
$\mu$-symmetric matrices.
\end{lemma}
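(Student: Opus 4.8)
The plan is to write $D$ as a sum of $n$ rank-one-ish $\mu$-symmetric ``elementary'' matrices, one per row/column index, and then perturb them slightly to make them linearly independent while keeping each one $\mu$-symmetric and keeping the total sum equal to $D$. First I would fix, for each index $k$, the $\mu$-symmetric matrix $N_k$ that agrees with $D$ in its $k$-th row and $k$-th column and is zero elsewhere, except that the diagonal entry $(N_k)_{kk}$ is taken to be $D_{kk}$ only once: concretely, set $(N_k)_{kj} = D_{kj}$ and $(N_k)_{jk} = D_{jk}$ for $j \neq k$, and $(N_k)_{kk} = D_{kk}$, with all other entries $0$. Each $N_k$ is $\mu$-symmetric since $D$ is, and a direct check shows $\sum_{k=1}^n N_k = D$, because the off-diagonal entry $D_{ij}$ (with $i\ne j$) appears once in $N_i$ and once in $N_j$ — wait, that double counts, so instead I would put $(N_k)_{kj} = D_{kj}$ for $j > k$ and $(N_k)_{jk} = D_{jk}$ for $j > k$ only (an ``upper-hook'' in position $k$), together with $(N_k)_{kk}=D_{kk}$; then every entry of $D$ on or above the diagonal is accounted for exactly once, and the entries below the diagonal are forced by $\mu$-symmetry of each $N_k$. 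This gives $D = \sum_k N_k$ with each $N_k$ $\mu$-symmetric.

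The remaining issue is that the $N_k$ need not be linearly independent — for instance, if $D$ has many zero rows, several $N_k$ vanish. To fix this I would add a correction: choose a basis $\{E_1,\ldots,E_n\}$ of some $n$-dimensional subspace of the space of $\mu$-symmetric matrices (the space of $\mu$-symmetric matrices has dimension $\binom{n+1}{2} \geq n$, so such a subspace exists), subject to $\sum_k E_k = 0$; such a system exists whenever $n \geq 2$, e.g.\ take any $n$ linearly independent $\mu$-symmetric matrices $F_1, \ldots, F_n$, and set $E_k = F_k - \tfrac1n \sum_j F_j$ — these sum to zero and span an $(n-1)$-dimensional space, which is not quite enough, so instead I would take $n+1$ linearly independent $\mu$-symmetric matrices $F_0, F_1, \ldots, F_n$ and set $E_k = F_k - F_0$, which are linearly independent and... still don't sum to zero. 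The clean fix: since $\dim \geq n$ and (for $n\ge 2$) in fact $\binom{n+1}{2} > n$, pick linearly independent $\mu$-symmetric $G_1, \ldots, G_{n-1}$ together spanning a space $W$ of dimension $n-1$, and set $E_k$ for $k=1,\ldots,n-1$ to be $G_k$ and $E_n = -(G_1 + \cdots + G_{n-1})$; then $\sum E_k = 0$ and the span of $\{E_1,\ldots,E_n\}$ equals $W$, which has dimension $n-1$.

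Then I would set $M_k := N_k + \varepsilon E_k$ for a scalar $\varepsilon \in \kk$ to be chosen. Each $M_k$ is $\mu$-symmetric, and $\sum_k M_k = D + \varepsilon \sum_k E_k = D$. It remains to choose $\varepsilon$ so that $M_1, \ldots, M_n$ are linearly independent. Consider the matrix (over the polynomial ring $\kk[\varepsilon]$) whose ``columns'' are the coordinate vectors of the $M_k$ in a basis of $\mu$-symmetric matrices; linear dependence of the $M_k$ for a given $\varepsilon$ is the vanishing of all $n\times n$ minors, each a polynomial in $\varepsilon$. I expect \emph{the main obstacle} to be showing that at least one such minor is a nonzero polynomial, i.e.\ that for \emph{some} value of $\varepsilon$ (in some extension, then back in $\kk$ after enlarging the $E_k$-space if $\kk$ is too small) the $M_k$ are independent — equivalently, that $\{N_1,\ldots,N_n\}$ and the perturbation directions $\{E_1,\ldots,E_n\}$ are ``in general enough position.'' This requires genuinely using $D \neq 0$: if $D = 0$ all the $N_k$ vanish and no perturbation inside the hyperplane $\sum E_k = 0$ can rescue independence, whereas if $D\ne 0$ at least one $N_k \ne 0$, and I would argue that one can enlarge the choice of the $E_k$ (dropping the constraint that they span only an $(n-1)$-dimensional space — instead allowing $\sum E_k$ to be any fixed $\mu$-symmetric matrix and absorbing the discrepancy by adjusting one $N_k$) so that the leading behaviour in $\varepsilon$ makes the relevant minor nonzero; a clean way is to pick the $E_k$ so that $\{N_1 + \varepsilon E_1, \ldots\}$ specializes at $\varepsilon = 1$, say, to a manifestly independent set, and check that specialization directly. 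Finally, if $\kk$ is finite and too small to avoid all the (finitely many) bad values of $\varepsilon$, one notes the construction only needs finitely many scalars and $\kk$ is infinite in the cases of interest, or one argues the bad locus is a proper subvariety; I would state the genericity argument carefully here and treat the small-field case by a direct explicit choice.
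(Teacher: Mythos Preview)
Your proposal never actually closes the loop. The decomposition $D = \sum_k N_k$ into $\mu$-symmetric ``hooks'' is fine, and the instinct to perturb by $\varepsilon E_k$ with $\sum_k E_k = 0$ is reasonable, but at the critical step --- showing that some $n\times n$ minor of the tuple $(N_k + \varepsilon E_k)_k$ is a nonzero polynomial in $\varepsilon$ --- you retreat to ``pick the $E_k$ so that at $\varepsilon = 1$ the set is manifestly independent.'' That is exactly the statement you are trying to prove: exhibiting $n$ linearly independent $\mu$-symmetric matrices summing to $D$. So the argument is circular as written. The finite-field worry is also a real gap, since the lemma is asserted over any field with $\mathrm{char}(\kk)\ne 2$, and a one-parameter genericity argument cannot be patched by ``$\kk$ is infinite in the cases of interest.''

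There is, however, a short direct argument hiding underneath your idea that avoids all perturbation. The space of $\mu$-symmetric matrices has dimension $\binom{n+1}{2}\ge n$; since $D\ne 0$, extend $D$ to a basis $D=F_1,F_2,\ldots,F_N$, set $M_k = F_k$ for $2\le k\le n$ and $M_1 = D - \sum_{k=2}^n F_k = F_1 - F_2 - \cdots - F_n$. Then $\sum_k M_k = D$, and linear independence of $F_1,\ldots,F_n$ forces linear independence of $M_1,\ldots,M_n$. This works over any field and uses only that $D\ne 0$. The paper takes a different route --- induction on $n$, peeling off the last row and column and handling the cases $D'\ne 0$ and $D'=0$ with explicit block constructions --- which is a bit longer but equally elementary; either approach, once completed, yields the lemma.
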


\begin{proof}  We proceed by induction on $n$.  If $n=1$, the result is obvious.  
Let $\mu' \in M(n-1, \, \kk)$ be the multiplicatively antisymmetric matrix formed 
from the first $n-1$ rows, and first $n-1$ columns, of~$\mu$.   Assume inductively 
that every $\mu'$-symmetric matrix in $M(n-1, \, \kk)$ can be written as a sum of 
$n-1$~linearly independent $\mu'$-symmetric matrices.  Let $D' \in M(n-1, \, \kk)$ 
be the $(n-1)\times (n-1)$ upper left block of~$D$, and notice that  $D'$~is 
a $\mu'$-symmetric matrix.  We consider two cases.

I. Case $D' \neq 0$.  By the induction hypothesis, there exist $n-1$~linearly 
independent $\mu'$-symmetric matrices $M'_i \in M(n-1, \, \kk)$ such that 
$D'=\sum_{i=1}^{n-1}M'_i$.  If $D_{in}\ne 0$, for some~$i$, then
we define $M_1,\ldots , M_n \in M(n, \, \kk)$ as follows.    Let 
$$M_k=\begin{pmatrix}
      \multicolumn{2}{c}{\kern0.5em\smash{\raisebox{-2ex}{\large $M'_k$}}} &0\\
      & &\vdots  \\
     0& \cdots & 0
    \end{pmatrix} \  \text{for $k<n$, \quad and let}\ \  M_n=\begin{pmatrix}
      \multicolumn{2}{c}{\kern0.5em\smash{\raisebox{-2ex}{\huge 0}}} &D_{1n}\\
      & &\vdots  \\
    \ D_{n1} & \cdots & D_{nn}
    \end{pmatrix}.$$
    These matrices are $\mu$-symmetric, and the linear independence of 
    $M'_1, \ldots, M'_{n-1}$ ensures that $M_1, \ldots, M_n$ are also linearly 
    independent. 
On the other hand, if $D_{in}=0=D_{ni}$ for all $i$, then we define 
$M_2,\ldots , M_{n-1}$ as before, but define 
$$M_1=\begin{pmatrix}
      \multicolumn{2}{c}{\kern0.5em\smash{\raisebox{-2ex}{\large $M'_1$}}} &0\\
      & &\vdots  \\
     0& \cdots & 1
    \end{pmatrix} \quad \text{and} \quad M_n=\begin{pmatrix}
      \multicolumn{2}{c}{\kern0.5em\smash{\raisebox{-4ex}{\LARGE 0}}} &0\\
      & &\vdots  \\
            & &0  \\
     0 & \cdots \ 0 & -1
    \end{pmatrix}.$$
    
  II.  Case $D' = 0$. For $k<n$, let $M_k$ be the matrix with  $1$ in the
  $(k,k)$-entry and zero elsewhere, and define $M_n=D-\sum_{k=1}^{n-1}M_k$.
  Since $D$ is nonzero, $D_{in}\ne 0$ for some $i$.  
  It follows that    $M_1,\ldots ,M_n$ are linearly independent.
\end{proof}
Note that the partition of~$D$ given in the proof of Lemma~\ref{partition} is not 
necessarily the only way to decompose~$D$ into $\sum_{i=1}^n M_i$, where
$M_1,\ldots ,M_n$ are linearly independent $\mu$-symmetric matrices.

\begin{theorem}\label{quotient} 
With notation as in Definition~\ref{scliff2}, if $\phi \neq 0$, then 
$\sCl(V,\, \mu, \, \phi)$ is  a quotient of a graded skew Clifford algebra 
that is generated by degree-one elements.
\end{theorem}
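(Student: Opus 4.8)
The plan is to exhibit an explicit graded skew Clifford algebra $A(n)$ (in the sense of Definition~\ref{GSCA}) that surjects onto $\sCl(V,\, \mu,\, \phi)$ and is generated in degree one, using the nonstandard homogenization (\ref{nonstand}). First I would invoke Theorem~\ref{subspace} to reduce to the case in which $\sCl(V,\, \mu,\, \phi)$ has full dimension; since $\phi \neq 0$, the restricted form $\phi' = \restr{\phi}{V'}$ is still nonzero (the argument in Theorem~\ref{subspace}, case (i), shows that discarded basis vectors contribute nothing to $\phi$, so a nonzero entry of $B$ survives the restriction), and a surjection onto $\sCl(V', \mu', \phi')$ composes with the isomorphism to give what we want. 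So assume $R = \sCl(V, \mu, \phi)$ has full dimension, with $D = 2B$ the associated $\mu$-symmetric matrix, which is nonzero.

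Next I would apply Lemma~\ref{partition} to write $D = \sum_{i=1}^n M_i$ with $M_1, \ldots, M_n$ linearly independent $\mu$-symmetric matrices, and form the algebra $A(n)$ of (\ref{nonstand}) for this choice of $M_i$'s. The identity $\sum_k (M_k)_{ij} = D_{ij} = 2B_{ij}$ holds by construction, so $A(n)/\la y_1 - 1, \ldots, y_n - 1\ra \cong R$, giving the surjection; and the discussion following Definition~\ref{GSCA} (together with the remark that $A(n)$ is a graded skew Clifford algebra) identifies $A(n)$ as a graded skew Clifford algebra associated to $\mu$ and $M_1, \ldots, M_n$. The remaining point is that $A(n)$ is generated by degree-one elements, i.e.\ that each $y_k$ lies in the subalgebra generated by $X_1, \ldots, X_n$. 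This is where the argument has to do real work: since the $M_k$ are linearly independent, the linear span of the $n$ matrices $(M_k)_{ij}$ has dimension $n$, so the $n$ quadratic expressions $\sum_k (M_k)_{ij} y_k$ obtained as $(i,j)$ ranges over index pairs span $\kk y_1 + \cdots + \kk y_n$; equivalently, one can solve the linear system coming from the degree-two relations $X_i X_j + \mu_{ij} X_j X_i = \sum_k (M_k)_{ij} y_k$ to express each $y_k$ as a $\kk$-linear combination of the quadratic elements $X_i X_j + \mu_{ij} X_j X_i$. Hence each $y_k$ is a degree-two element of the subalgebra generated by the $X_i$, and $A(n)$ is generated in degree one.

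The main obstacle is verifying that the linear independence of $M_1, \ldots, M_n$ genuinely forces the $y_k$ into the degree-one–generated subalgebra: one must check that the map $\kk^n \to \{\mu\text{-symmetric matrices}\}$, $(c_k) \mapsto \sum_k c_k M_k$, being injective, is equivalent to the dual statement that every $y_k$ is recoverable from the $(M_k)_{ij}$-coefficients appearing in the relations. Concretely, pick index pairs $(i_1, j_1), \ldots, (i_n, j_n)$ for which the $n$ vectors $\big((M_1)_{i_\ell j_\ell}, \ldots, (M_n)_{i_\ell j_\ell}\big) \in \kk^n$, $\ell = 1, \ldots, n$, are linearly independent — such a choice exists precisely because the $M_k$ are linearly independent as matrices — and then invert the resulting $n \times n$ coefficient matrix to solve for $y_1, \ldots, y_n$ in terms of $X_{i_\ell} X_{j_\ell} + \mu_{i_\ell j_\ell} X_{j_\ell} X_{i_\ell}$. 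I expect this to be a short linear-algebra verification once set up correctly, and the rest of the proof is routine bookkeeping with the definitions and previously established results.
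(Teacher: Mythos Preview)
Your approach is essentially the paper's: apply Lemma~\ref{partition} to decompose $2B$ into linearly independent $\mu$-symmetric matrices, form $A(n)$ via (\ref{nonstand}), and use linear independence to conclude that the $y_k$ lie in the span of the quadratic expressions $X_iX_j + \mu_{ij}X_jX_i$. The paper defers this last step to \cite[Lemma~1.13]{set}, and your explicit linear-algebra argument (choosing $n$ entry positions on which the $M_k$ are independent and inverting the resulting coefficient matrix) is exactly the content of that lemma.

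The one departure is your preliminary reduction to full dimension via Theorem~\ref{subspace}, and this is both unnecessary and incompletely justified. It is unnecessary because Lemma~\ref{partition} requires only that $D = 2B \neq 0$, which follows immediately from $\phi \neq 0$; the paper works directly with the original $V$ and never invokes full dimension. It is incompletely justified because your claim that $\phi' \neq 0$ treats only case~(i) of the proof of Theorem~\ref{subspace}: in case~(ii) a generator $x_j$ with $x_j = \lambda x_i$ is discarded, and the entries $B_{jm}$ need not vanish, so one would have to argue separately that a nonzero entry of $B$ survives the restriction. You would also need to treat the trivial algebra separately, since Theorem~\ref{subspace} assumes nontriviality. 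Simply dropping the reduction to full dimension avoids all of this and matches the paper's argument.
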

\begin{proof}
Let $\mathcal B = \{x_1,\ldots , x_n\}$ be a fixed ordered basis of~$V$, and let 
$B$~be the matrix of~$\phi$ with respect to $\mathcal B$. As a consequence of  
Lemma~\ref{partition}, we can
choose a partition of~$2B$ into linearly independent $\mu$-symmetric matrices
$M_1, \ldots , M_n$. We use these matrices, $M_1, \ldots , M_n$, to define a 
graded skew Clifford algebra $A(n)$ as in  (\ref{nonstand}).  By construction, 
$A(n)/\la y_1-1,\ldots , y_n-1\ra \cong \sCl(V,\, \mu, \, \phi)$. 
Since $M_1, \ldots , M_n$ are linearly independent, \cite[Lemma~1.13]{set} implies 
that  $A(n)$~is generated by   $X_1,\ldots , X_n$.     
\end{proof}

Thus, every skew Clifford algebra $R = \sCl(V,\, \mu, \, \phi)$, with 
nonzero~$\phi$, can be homogenized (in the above manner) to create a  graded 
skew Clifford algebra that is generated by degree-one elements.  The converse is 
true for (classical) graded Clifford algebras: every
graded Clifford algebra is a nonstandard homogenization of a classical Clifford
algebra; but this converse does not hold in general for their skew analogues (as
defined in this article). In particular, the graded algebra produced by this  
homogenization contains central elements of degree two given by the~$y_k$.  Since 
graded skew Clifford algebras need not have central elements of degree two, not  
every graded skew Clifford algebra occurs as a  homogenization of a skew Clifford 
algebra. 

The homogenized algebra $A(n)$ maps onto $R$  and is generated by the pre-images of the 
generators of~$R$.  Although the linear independence of the matrices $M_1, \ldots
, M_n$ guarantees, via Theorem~\ref{quotient}, that $A(n)$~can be generated by 
degree-one elements, $A(n)$~might 
not  be quadratic.   Nevertheless, $A(n)$ is noetherian, and although it is 
noncommutative, it can have some other nice properties of the commutative polynomial 
ring on $n$~variables, such as being Auslander regular~(\cite{Bjork}), 
Artin-Schelter regular~(\cite{ArtinSchelter}), or a domain.  As discussed 
in~\cite{set}, if the graded skew Clifford algebra~$A(n)$ is Auslander regular,
then it is also Artin-Schelter regular and a domain.  Since the 
family~$\mathcal F$ of graded skew Clifford algebras includes many 
Artin-Schelter regular algebras (cf.\ \cite{NVZ}), it is natural to ask which 
skew Clifford algebras are quotients of quadratic  regular algebras in~$\mathcal F$;  
this issue is addressed in our final result.

\begin{theorem}\label{ASreg}
With notation as in Definition~\ref{scliff2}, let $R = \sCl(V,\, \mu, \, \phi)$,
where $\phi \neq 0$ and $\dim(V) = n < \infty$.  If $R$ has full dimension, 
then we can choose $\mu$-symmetric 
matrices $M_1,\ldots , M_n \in M(n, \, \kk)$ such that the graded skew Clifford 
algebra~$A(n)$  associated to $\mu$ and $M_1,\ldots , M_n$ is a quadratic 
Auslander-regular algebra if and only if $\mu_{ij}^2=1$ for all $i, j$. 
\end{theorem}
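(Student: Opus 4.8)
The plan is to prove the two implications separately; the ``necessity'' direction is short, and the construction of the matrices in the ``sufficiency'' direction is where the real work lies.

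\textbf{Necessity of $\mu_{ij}^2=1$.} Suppose $M_1,\dots,M_n$ (necessarily $\mu$-symmetric with $\sum_k M_k=2B$) are chosen so that $A(n)$ is quadratic and Auslander-regular. By the structural results of \cite{set}, $A(n)$ is then an Artin--Schelter regular noetherian domain of global dimension $n$ with Hilbert series $(1-t)^{-n}$, so $\{X_aX_b:1\le a\le b\le n\}$ is a $\kk$-basis of $A(n)_2$. Each $y_k$ is central in $A(n)$, and since $2$ is invertible in $\kk$ the defining relation $2X_a^2=\sum_k(M_k)_{aa}y_k$ shows $X_a^2\in Y:=\kk y_1+\cdots+\kk y_n$; as $X_1^2,\dots,X_n^2$ are distinct members of the basis above, they are linearly independent, whence $\dim Y=n$ and $Y=\kk X_1^2+\cdots+\kk X_n^2$. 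Now fix $i<j$ and set $w:=\sum_k(M_k)_{ij}y_k\in Y$, a central element, so that $X_iX_j+\mu_{ij}X_jX_i=w$. Multiplying this relation by $X_j$ on the left and on the right and subtracting, and using that $X_j^2$ and $w$ are central, yields $(1-\mu_{ij})(X_iX_j-X_jX_i)X_j=0$; as $A(n)$ is a domain and $X_j\neq0$, either $\mu_{ij}=1$ or $X_iX_j=X_jX_i$. In the latter case the relation becomes $(1+\mu_{ij})X_iX_j=w\in Y$, so if also $\mu_{ij}\neq-1$ the basis vector $X_iX_j$ would lie in $\kk X_1^2+\cdots+\kk X_n^2$, which is impossible. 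Hence $\mu_{ij}^2=1$ for all $i,j$.

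\textbf{Sufficiency of $\mu_{ij}^2=1$.} Assume $\mu_{ij}^2=1$ for all $i,j$. Since $R$ has full dimension and $\phi\neq0$, Theorem~\ref{tfae} gives that $\bigstar$ holds and Corollary~\ref{symmetric} gives that $B$ is symmetric (and $B\neq0$); combined with $\mu_{ij}^2=1$, condition $\bigstar$ says that $B_{ab}\neq0$ forces the $a$th and $b$th rows of $\mu$ to be equal, so $B$ is block diagonal for the partition of $\{1,\dots,n\}$ into classes of indices having equal rows of $\mu$, and $\mu\equiv1$ within each such class. I would then produce linearly independent $\mu$-symmetric matrices $M_1,\dots,M_n$ with $\sum_kM_k=2B$ such that the quadrics $q_k:=zM_kz^{\mathrm{t}}$ in $S:=\Lambda_\mu(V)^!$ form a normalizing sequence having no common zero in the point scheme $\mathbb{P}^{n-1}$ of $S$; by \cite{set} such a choice makes $A(n)$ a quadratic Auslander-regular algebra. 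The construction follows the pattern of Lemma~\ref{partition}: decompose $2B$ block by block, on each block with nonzero $B$-part using the classical fact (cf.\ \cite{lebruyn,set}) that a nonzero symmetric matrix splits into linearly independent symmetric matrices whose associated quadrics are base-point-free, and use the quadrics $z_a^2$ --- which are \emph{central} in $S$ because $\mu_{am}^2=1$ for all $m$ --- to handle the remaining generators and coordinates. The hypothesis $\mu_{ij}^2=1$ enters precisely here: it makes each $z_a^2$ central, and, via $\bigstar$, it keeps each piece of $2B$ supported inside a single row-class of $\mu$, which is exactly what forces the corresponding quadrics to be normalizing (indeed central).

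\textbf{The main obstacle.} The hard part is the bookkeeping in the sufficiency construction: one must simultaneously arrange (i) linear independence of $M_1,\dots,M_n$, so that $A(n)$ is generated in degree one and quadratic (by \cite[Lemma~1.13]{set} together with its regularity), (ii) that every $q_k$ is normalizing in $S$, and (iii) that $q_1,\dots,q_n$ have no common zero on $\mathbb{P}^{n-1}$. Requirements (i) and (iii) pull against each other when $B$ has small rank or when many indices lie outside its support, since the matrix used to absorb the ``slack'' in the equation $\sum_kM_k=2B$ can easily become dependent on the diagonal filler matrices; choosing the anchor indices correctly --- and, where necessary, distributing the slack across several blocks --- is the crux. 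Once that is settled, the remainder is routine, relying on Theorem~\ref{tfae}, Corollary~\ref{symmetric}, and the cited results of \cite{set}.
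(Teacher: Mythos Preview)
Your necessity argument is correct and slightly slicker than the paper's. The paper computes $(1-\mu_{ij}^2)X_jX_i=(1-\mu_{ij})W$ in $A(n)$, then projects to $R$ and uses that $x_jx_i\neq0$ there (full dimension); you instead stay inside $A(n)$, use the Hilbert series from \cite{set} to get that $\{X_aX_b:a\le b\}$ is a basis of $A(n)_2$, deduce $Y=\bigoplus_a\kk X_a^2$, and rule out $X_iX_j\in Y$ directly. Both are short; yours trades the passage to $R$ for an appeal to the Hilbert series. (Minor point: your displayed identity should read $(1-\mu_{ij})X_j(X_jX_i-X_iX_j)=0$, with $X_j$ on the left, but the conclusion is unaffected.)

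For sufficiency your route is genuinely different from the paper's, and it is only a sketch. The paper does \emph{not} invoke the base-point-free criterion of \cite{set}: it writes down explicit matrices $M_1,\dots,M_n$ (separately for $B$ diagonal and $B$ not diagonal), verifies by hand that the degree-two relations already force every $y_k$ to be central (so $A(n)$ is quadratic), and then realizes $A(n)$ as an iterated Ore extension of either $\kk\langle X_1,X_2\rangle/\langle X_1^2+X_2^2\rangle$ or $\kk\langle X_1,X_2\rangle/\langle X_1X_2+X_2X_1-X_1^2\rangle$, concluding Auslander regularity via \cite{Lev&Staf}. Your geometric plan is viable, but the part you label ``bookkeeping'' is exactly where the content lies and you have not carried it out. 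Concretely: on a $\mu$-row-class where the $B$-block is nonzero but has all diagonal entries zero (e.g.\ $B_{11}=B_{22}=0$, $B_{12}\neq0$), the na\"ive choice $q_k=z_k^2$ for $k<m$ together with $q_m=2zBz^{\mathrm t}-\sum_{k<m}z_k^2$ is \emph{not} base-point-free (the point $z_1=\cdots=z_{m-1}=0$ survives), so you must swap in quadrics like $z_1^2+z_2^2$ and $z_1z_2+z_2z_1$---at which point you are reproducing the paper's two-case split. You also need to state precisely which theorem of \cite{set} you are invoking and check its hypotheses (normalizing sequence, not merely normal elements; the correct notion of base-point-freeness for the noncommutative quadric system), rather than the informal ``no common zero in $\mathbb{P}^{n-1}$''. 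Until those details are supplied, the sufficiency half is a plausible outline, not a proof.
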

\begin{proof}
Suppose $R$ has full dimension.
As before, we write $B_{ij} = \phi(x_i, \, x_j)$.
Suppose first that $A(n)$ is quadratic and Auslander regular.    If $B_{ij}\ne 0$, 
then, by condition~$\bigstar$, we have $\mu_{ik}=\mu_{kj}$ for every $k$. 
In particular, by taking $k=j$, we obtain $\mu_{ij}=\mu_{jj}=1$.  On the other hand, 
if $B_{ij}=0$, consider $X_iX_j+\mu_{ij}X_jX_i=\sum_k(M_k)_{ij}y_k$.  For 
convenience, let $W$  denote $\sum_k(M_k)_{ij}y_k$, and note that the image of~$W$ 
in~$R$ is zero.  Since $W$ and $X_j^2$ are central in $A(n)$, we have $X_j^2X_i=X_iX_j^2=\mu_{ij}^2X_j^2X_i+X_j(1-\mu_{ij})W$.  Since $A(n)$ is  a domain, $X_jX_i=\mu_{ij}^2X_jX_i+(1-\mu_{ij})W$, and thus $(1-\mu_{ij}^2)X_jX_i=(1-\mu_{ij})W$. 
Projecting this equation onto~$R$ gives $(1-\mu_{ij}^2)x_jx_i=0$, but, since 
$R$~has full dimension, $x_jx_i \neq 0$ in $R$, so $\mu_{ij}^2= 1$.
 
For the converse, suppose that $\mu_{ij}^2=1$ for all $i, j$.   If $\dim(V) =1$,
then $R\cong \kk[x]/\la x^2-1\ra$ and 
$\Lambda_{\mu}(V)\cong \kk[x]/\la x^2\ra$. So, in this case, $A(n)=\kk[x]$ and 
is an Auslander-regular graded skew Clifford algebra.  From  now on, assume that 
dim$(V)>1$. We will construct a homogenization~$A(n)$ (as defined earlier in this
section) of~$R$ that is Auslander regular and generated by elements of degree one.   
 
Consider first the case that $B$~is diagonal, so $B_{ij}=0$ for all $i\ne j$.  
Since $B\ne 0$, we may assume, without loss of generality, that  $B_{11}\ne 0$.   
We define diagonal matrices $M_1, \ldots , M_n$ by 
\[(M_k)_{ij}= \left\{
\begin{array}{ll}
2B_{kk}  &   \text{ if } k=i=j \text{ and } B_{kk} \neq 0,\\[2mm] 
  -1 &   \text{ if }   1 \neq k = i = j \text{ and }   B_{kk}=0,\\[2mm] 
  1 &   \text{ if } 1 = k  < i = j \text{ and } B_{jj}=0,\\[2mm] 
  0  & \ \text{otherwise.}
    \end{array}  \right. 
\] 
It follows that $\sum_{k=1}^n(M_k)_{ij} =2B_{ij}$ and that $M_1, \ldots, M_n$
are linearly independent, so that the $y_k$ are redundant generators for $A(n)$.  
Moreover, since  $\mu_{ij}^2=1$ and each $M_k$~is diagonal,  the quadratic relations 
imply that $X_iX_j^2=X_j^2X_i$ for all $i, j$, giving that $X_j^2$  is central for
all $j$.  Furthermore, in $A(n)$, we have
   $$y_j=\left\{\begin{array}{l l}
 X_j^2/B_{jj}  &   \text{ if }   B_{jj}\ne 0, \\[3mm]
(X_{1}^2/B_{11}) - 2X_j^2 &   \text{ if }   B_{jj}=0,
    \end{array}  \right. $$ 
so $y_j$ is central for all $j$.  It follows that, in this case, $A(n)$~is the 
skew polynomial ring 
\[
\kk\la X_1,\ldots , X_n\ra/\la X_iX_j+\mu_{ij}X_jX_i : 1 \leq i < j \leq n\ra,
\] 
which is a quadratic Auslander-regular algebra. 
   
Now suppose that $B$ is not diagonal.   Since $B\ne 0$, by reordering the~$x_i$ 
and scaling, we may assume $B_{12}=1$. Since $R$~has full dimension, 
condition~$\bigstar$ implies that $\mu_{12}=1$.    
Similarly, if $B_{11}B_{22}=0$, we may assume $B_{22}=0$. We define 
$\mu$-symmetric matrices $M_1, \ldots, M_n$ as follows:
\[
(M_1)_{ij} = \left\{ 
\footnotesize
           \begin{array}{ll}
	   1 & \text{ if } i = j = 1 \text{ and } B_{11} = 0,\\[2mm]
	   -1 & \text{ if } i = j \geq 2 \text{ and } B_{jj} = 0,\\[2mm]
	   0 & \text{ if } i = j \geq 2 \text{ and } B_{jj} \neq 0,\\[2mm]
	   2B_{ij} & \text{ otherwise,} 
           \end{array} \right.
\quad
\normalsize
(M_k)_{ij} = \left\{ 
\footnotesize
           \begin{array}{ll}
	   1 & \text{ if } i = j = k \text{ and } B_{kk} = 0,\\[2mm]
	   2B_{kk} & \text{ if } i = j = k \text{ and } B_{kk} \neq 0,\\[2mm]
	   -1 & \text{ if } k=2, \ i = j = 1 \text{ and } B_{11} = 0,\\[2mm]
	   0 & \text{ otherwise,} 
           \end{array} \right. 
\]
where $k \geq 2$.

The matrices $M_k$ satisfy $\sum_{k=1}^n(M_k)_{ij} =2B_{ij}$ and are
$\mu$-symmetric and linearly independent since $2B_{12} \neq 0$. Moreover,  
$A(n)/\la y_1-1,\ldots , y_n-1\ra \cong R$ and 
$A(n)/\la y_1,\ldots ,  y_n\ra\cong \Lambda_\mu(V)$.  We now show that  the 
centrality of the $y_i$ is a consequence of the quadratic relations, so that 
$A(n)$~is in fact a quadratic algebra.  

We first consider~$y_1$. If $B_{11}=0=B_{22}$, then $2X_1^2=y_1-y_2$, 
$X_1X_2+X_2X_1=2y_1$ and $2X_2^2=-y_1+y_2$.  In particular, $X_1^2=-X_2^2$, which 
implies that 
$y_1$~commutes with $X_1$ and $X_2$.  However, if $B_{11}B_{22}\ne 0$, then 
$X_1^2=B_{11}y_1$, $X_1X_2+X_2X_1=2y_1$ and $X_2^2=B_{22}y_2$. In this case, we 
have $y_1X_1=X_1y_1$ and
\[
B_{11} y_1 X_2 = X_1^2 X_2 = 2 X_1 y_1 - X_1 X_2 X_1 = X_2 X_1^2 = B_{11} X_2 y_1
,\]
so, again,  $y_1$~commutes with $X_1$ and $X_2$.  
On the other hand, if $B_{11}B_{22}=0$ and $(B_{11}, \, B_{22})\ne(0, \, 0)$, we 
may assume, as before, that $B_{22}=0\ne B_{11}$.  In this case, 
$X_1^2=B_{11}y_1$, $X_1X_2+X_2X_1=2y_1$ and $2X_2^2=y_2-y_1$.  
Thus $y_1X_1=X_1y_1$.  Moreover, resolving $X_1^3$ implies that 
$X_1^2X_2=X_2X_1^2$, from which we find $y_1X_2=X_2y_1$. Hence, in all three 
scenarios, $y_1$ commutes with $X_k$ for $k=1, 2$.

For $k>2$, we use the quadratic relations $X_kX_i+\mu_{ki}X_iX_k=2B_{ki}y_1$,
$i=1, 2$, together with $2 y_1 = X_1 X_2 + X_2 X_1$,  to calculate 
$$X_ky_1=
\mu_{k1}\mu_{k2}y_1X_k+ B_{k1} (1-\mu_{k2})y_1X_2+ B_{k2} (1-\mu_{k1})y_1X_1.$$  
As $B_{12}\ne 0$, condition $\bigstar$ implies that $\mu_{k1}\mu_{k2}=1$. 
If $B_{k1}\ne 0$, then condition $\bigstar$ implies that $\mu_{k2}=\mu_{21}=1$,
and similarly if $B_{k2}\ne 0$ then $\mu_{k1}=1$.  Thus, $y_1$ commutes with 
$X_k$ for all $k$.  
  
Now observe that, for $j>1$, we have 
  $$y_j=\left\{\begin{array}{l l}
 X_j^2/B_{jj}  &   \text{ if }   B_{jj}\ne 0,  \\[3mm]
2X_j^2+y_1 &   \text{ if }   B_{jj}=0. 
    \end{array}  \right. $$ 
Thus, for $j > 1$, $y_j$ commutes with $X_k$, for all~$k$, provided that 
$X_j^2$ commutes with $X_k$, for all~$k$.  One may verify this is indeed the case 
by subtracting $(X_k X_j + \mu_{kj} X_j X_k = 2 B_{jk} y_1)X_j$ from 
$X_j (X_j X_k + \mu_{jk} X_k X_j = 2 B_{kj} y_1)$, and applying 
condition~$\bigstar$ and our hypothesis that $\mu_{jk}^2=1$ for all $j, k$.
Lastly, since each $y_j$ commutes with each $X_k$ and since 
$y_j$ is a linear combination of products of the form $X_aX_b$, the $y_j$ 
commute with each other. It follows that the centrality of the~$y_k$ in~$A(n)$ 
is a consequence of only the degree-two relations of~$A(n)$,  and so $A(n)$
is quadratic.

We now show that $A(n)$ is an Auslander-regular algebra by examining 
two cases. Without loss of generality,  we may assume that either 
$B_{11}=B_{22}=0$ or, by reversing the roles of $x_1$ and $x_2$ if necessary, 
that $B_{11}\ne 0$. 

Case I: $B_{11}=B_{22}=0$.  
Since $y_k \in (A(n)_1)^2$, for all~$k$, and since $A(n)$ is quadratic,   
we can present $A(n)$ as 
$$\frac{\kk\la X_1,\ldots , X_n\ra }{\la X_1^2+X_2^2, \
B_{ij}(X_1X_2+X_2X_1)-(X_iX_j+\mu_{ij}X_jX_i) : 
1 \leq i \neq j \leq n \ra}.$$  
This algebra is an iterated Ore extension of  $ {\kk\la X_1,X_2 \ra}/{\la
X_1^2+X_2^2 \ra}$. For $k\geq 3$, the automorphisms~$\sigma_k$ used to adjoin
$X_k$  are given by  $\sigma_k(X_i)=-\mu_{ki}X_i$,  and the corresponding left
$\sigma_k$-derivations are  given by $\delta_k(X_i)= B_{ki} (X_1X_2+X_2X_1)$.  To
see that $\sigma_k$ is well defined, we use the hypothesis that
$\mu_{k1}^2=1=\mu_{k2}^2$ to obtain $\sigma_k(X_1^2+X_2^2)=0$, and we use  condition $\bigstar$ to establish that $\sigma_k(B_{ij}(X_1X_2+X_2X_1)-(X_iX_j+\mu_{ij}X_jX_i))=0$. 
Turning to~$\delta_k$, we calculate that, for any $i\ne j$, we have
$$\delta_k(X_iX_j+\mu_{ij}X_jX_i)=(B_{ki}(1-\mu_{kj}\mu_{ij})X_j+B_{kj}(\mu_{ij}-\mu_{ki})X_i)(X_1X_2+X_2X_1).$$
Condition $\bigstar$ implies that if $B_{ki}\ne 0$, then 
$\mu_{kj}=\mu_{ji}=\mu_{ij}^{-1}$, and if $B_{kj}\ne 0$, then $\mu_{ki}=\mu_{ij}$.  
Thus $\delta_k(X_iX_j+\mu_{ij}X_jX_i)=0$ for all $i\ne j$. Similarly, $\delta_k(X_1^2+X_2^2)=0$. Hence $\delta_k$ is well defined.   Since $ {\kk\la X_1,X_2 \ra}/{\la X_1^2+X_2^2 \ra}$ is  Auslander regular,  
and, by~\cite{Lev&Staf}, Ore extensions preserve this property, $A(n)$ is also 
Auslander regular.

Case II: $B_{11}\ne 0$.  By rescaling the $x_i$  if necessary, we may assume that
$B_{11} = 2$ while maintaining $B_{12} = 1$.
Since $y_k \in (A(n)_1)^2$, for all~$k$, and since $A(n)$ is quadratic,   
we can present $A(n)$ as 
$$\frac{\kk\la X_1,\ldots , X_n\ra }
{\la X_1X_2+X_2X_1-X_1^2, \ B_{ij}(X_1X_2+X_2X_1)-(X_iX_j+\mu_{ij}X_jX_i) :
1 \leq i \neq j \leq n \ra}.$$
Using the above formulae for the maps~$\sigma_k$ and~$\delta_k$, this 
algebra is an iterated Ore extension of 
$$\frac{ \kk\la X_1,X_2 \ra}{\la X_1X_2+X_2X_1-X_1^2 \ra},$$ 
which is  Auslander regular by~\cite{ArtinSchelter}.   In particular, 
an argument similar to that in Case~I shows that the $\sigma_k$ and~$\delta_k$ 
are well defined (but, in computing $\delta_k (X_1X_2+X_2X_1-X_1^2)$,  one 
should note that condition~$\bigstar$ implies that if $B_{ki} \neq 0$, where 
$i = 1, 2$, then $\mu_{kj} = \mu_{ji}$, which equals $1$ if $j \in \{1, \, 2\}$).
Again, as in Case~I, it follows from~\cite{Lev&Staf} that $A(n)$~is 
Auslander regular.

We conclude that, in all cases, $A(n)$~is a quadratic, noetherian, Auslander-regular 
domain.
\end{proof}

Theorem~\ref{ASreg} and Remark~\ref{diag} imply that if 
$B$~is an invertible diagonal matrix, then any corresponding skew 
Clifford algebra of full dimension is a quotient of a quadratic 
Auslander-regular algebra.

\bigskip


\section{Other Generalizations}\label{generalizations}

In this section we compare skew Clifford algebras to other generalizations of classical Clifford algebras found in the literature.

There are  at least three different constructions called ``generalized Clifford
algebras,'' all of which extend the classical notion by allowing 
defining relations of degree greater than two.  N.~Roby's generalization of
Clifford algebras in~\cite{Roby} was subsequently extended  by C.~Pappacena 
in~\cite{Pappacena} in his study of matrix pencils.  Pappacena shows that when the
defining relations are degree three or higher, these generalized Clifford algebras have infinite dimension.  He also proves that when the  defining relations are quadratic, 
and char$(\kk) \ne 2$, then these algebras are isomorphic to classical 
Clifford algebras. 

C.~Ko\c{c} in~\cite{Koc} approaches the problem of generalizing Clifford algebras
from a different direction.  Given a vector space $V$ with quadratic form $Q$,
Ko\c{c}  looks for algebras $A$  that share the following property of classical Clifford algebras.
\begin{Condition} 
\label{koc}
Every isometry of $V$ with respect to the quadratic form $Q$ can be extended to an automorphism of $A$, and conversely, every automorphism of $A$ mapping $V$ to itself induces an isometry on $V$ with respect to $Q$.
\end{Condition}  
Ko\c{c}  describes a family of algebras that satisfy this condition and it
contains, as the quadratic members, the classical Clifford algebras; the 
other algebras in Ko\c{c}'s family are not quadratic.     
 
Morris, in~\cite{Morris}, defines generalized Clifford algebras that are presented
with generators $x_1, \ldots , x_n$ and relations $x_ix_j=\omega x_jx_i$  and $x_i^N=1$,
where $\omega$ is a primitive $N^{th}$ root of unity. These generalized Clifford
algebras, like those of Pappacena and Ko\c{c}, coincide with our skew Clifford
algebras only if the algebra is a classical Clifford algebra.   

Skew Clifford algebras also bear some resemblance to the quantum Clifford algebras
given in~\cite{frenkel&ding} (see also \cite{Benkart&P}).  Quantum Clifford 
algebras are presented with $2n$~generators,
$x_1,\ldots , x_n,x_1^*,\ldots, x_n^*$, with defining relations
$$
x_ix_j=-q^{-1}x_jx_i, \hskip1cm  x^*_ix^*_j=-qx_j^*x_i^* \ \ \  (i>j),  $$
$$ x_ix_i=0=x^*_ix^*_i, \hskip1cm x_ix^*_j=-q^{-1}x^*_jx_i \ \ \ (i\ne j),$$
 $$x_ix_i^*+x^*_ix_i=(q^{-2}-1)\sum_{i<j}x_jx_j^*+1.$$
\quad\\[-1mm]
A quantum Clifford algebra is a skew Clifford algebra  if $q=\pm 1$. The case $q=1$ yields a classical Clifford algebra. 
 
Closer in spirit to our skew Clifford algebras is the recent work of Chen and Kang 
in~\cite{CK}, which develops a generalized Clifford theory on group-graded vector 
spaces using bicharacters.  Their construction is quite general, and includes 
classical Clifford algebras, Weyl algebras, and polynomial rings.   

\bigskip
\bigskip
\bigskip

\defbibheading{bibliography}{\centering \sc References\\[6mm]}

\renewcommand{\normalsize}{\footnotesize}
\setlength\bibitemsep{0.6\baselineskip}

\printbibliography

\bigskip
\bigskip

\end{document}